\newtheorem{theorem}{Theorem}[section]
\newtheorem{Lemma}[theorem]{{\bf Lemma}}
\newtheorem{rem}[theorem]{{\bf Remark}}
\newtheorem{ex}[theorem]{{\bf Example}}
\newtheorem{definition}{Definition}[section]
\numberwithin{equation}{section}
\newenvironment{proof}{\indent{\em Proof:}}{\quad \hfill
$\Box$\vspace*{2ex}}
\newcommand{\ra}{\rightarrow}
\font\Bbb=msbm10 at 12pt
\newcommand{\R}{\mbox{\Bbb R}}
\newcommand{\N}{\mbox{\Bbb N}}
\begin{document}
\setcounter{page}{1}
\begin{center}
\vspace{0.4cm} {\large{\bf Nonlocal Boundary Value Problem for Generalized Hilfer Implicit Fractional Differential Equations }} \\

\vspace{0.35cm}
Ashwini D. Mali $^{1}$\\
maliashwini144@gmail.com\\

\vspace{0.5cm}
Kishor D. Kucche $^{2}$ \\
kdkucche@gmail.com \\

\vspace{0.35cm}
$^{1,2}$ Department of Mathematics, Shivaji University, Kolhapur-416 004, Maharashtra, India.

\end{center}

\def\baselinestretch{1.0}\small\normalsize

\begin{abstract}
In this paper, we derive the equivalent fractional integral equation to the nonlinear implicit fractional differential equations involving $\varphi$-Hilfer fractional derivative subject to nonlocal fractional integral boundary conditions.
The existence of a solution, Ulam-Hyers, and Ulam-Hyers-Rassias stability has been acquired by means equivalent fractional integral equation. Our investigations depend on the fixed point theorem due to Krasnoselskii and the Gronwall inequality involving  $\varphi$-Riemann--Liouville fractional integral. An example is provided to show the utilization of primary outcomes.

\end{abstract}
\noindent\textbf{Key words:} Implicit nonlinear fractional differential equations; $\varphi$-Hilfer fractional derivative; Nonlocal integral boundary conditions; Existence of solution; Ulam-Hyers stability.\\
\noindent
\textbf{2010 Mathematics Subject Classification:} 34A08, 34A09,     34D20, 34A40, 35A23.\\

\def\baselinestretch{1.5}
\allowdisplaybreaks
\section{Introduction}
The fundamental study about initial value problem (IVP) for Caputo implicit fractional differential equations (FDEs) of the form 
\begin{align*} 
{}^{c}D^{\alpha}x(t)= f(t,x(t), {}^{c}D^{\alpha}x(t)), ~x(0)=x_0,
\end{align*} 
have been initiated by Nieto et al. \cite{Nieto}. The major investigation  relating the existence and uniqueness of the solution and  Ulam-Hyers stabilities for implicit FDEs with different kinds of initial and boundary conditions can be found in the work of Benchohra et al.  \cite{Bench1, Bench2, Bench3, Bench4, Bench5, Bench6, Bench7} . Kucche et al. \cite{Kucche1} investigated existence, the interval of existence and uniqueness of solutions along with various qualitative properties of solution for the implicit FDEs mentioned above. 

There have appeared numerous significant works about the nonlinear boundary value problems( BVPs) for FDEs, out of which we are mentioning here only a few that are relating to the works of the present paper. Existence and uniqueness results for nonlocal BVPs of nonlinear FDEs involving Caputo fractional derivative has been examined by Benchohra et al.\cite{Bench8} and Ahmad and Nieto\cite{Ahmad} through various types of fixed point theorems. Utilizing the methodology of \cite{Ahmad}  the study has been extended to nonlocal BVPs for nonlinear integrodifferential equations \cite{Ahmad1}. Zhang\cite{S. Zhang}, obtained results relating to the existence and multiplicity of positive solutions of BVP for nonlinear Caputo FDEs. In \cite{Jiang}, Jiang and Wang investigated results concerning the existence of solutions of multi-point BVP  for Riemann-Liouville FDEs.

On the other hand, Hilfer\cite{Hilfer} defined a two parameter fractional derivative called Hilfer fractional derivative that incorporates both fractional operators, Riemann-Liouville derivative  and Caputo derivative. The fundamental  work on the theory of IVP for FDEs involving Hilfer derivative can be found in \cite{Furati}.  Asawasamrit et al. \cite{Asawasamrit} initiated the study of BVPs for FDEs involving Hilfer fractional derivatives subject to nonlocal integral boundary conditions. The BVP for fractional integrodifferential equations with Hilfer derivative has been researched in \cite{Thabet} for existence and data dependence of solutions. The implicit FDEs with
a nonlocal condition involving Hilfer fractional derivative investigated in \cite{Bench9, Vivek} for existence and Ulam types stabilities.

The Hilfer version of the fractional derivative with another function called $\varphi$-Hilfer fractional derivative has been presented by Jose et al. \cite{Vanterler1}. The basic study about existence and uniqueness of the solution of a nonlinear $\varphi$-Hilfer FDEs with different kinds of initial conditions and the  Ulam-Hyers and Ulam-Hyers-Rassias stabilities of its solutions have been explored  in \cite{jose1, jose2, vanterler3, j1, j3, Mali1, Mali2, Kharade} . The implicit FDEs  involving $\varphi$-Hilfer derivative has been investigated in \cite{jose3} for the existence and uniqueness of the solution and the  Ulam-Hyers-Rassias stability.

Thinking about available literature, it is seen that the study of boundary value problems for nonlinear implicit FDEs involving generalized fractional derivative  is still in the underlying stages and numerous parts of this theory should be investigated. Inspired by the work of the papers mentioned above and the work of \cite{Bench8, Asawasamrit},  the primary goal of the present work is to establish the existence the solutions and to examine the Ulam-Hyers stabilities of the following  implicit FDEs involving $\varphi$-Hilfer fractional derivative subject to nonlocal integral boundary conditions,
\begin{align}
 ^H \mathcal{D}^{\mu,\,\nu\,;\, \varphi}_{a +}y(t)
&= f(t, y(t),^H \mathcal{D}^{\mu,\,\nu\,;\, \varphi}_{a +}y(t)), ~t \in  J', ~ 1<\mu<2, ~0\leq\nu\leq 1 \label{eq1}\\
 y(a)&=0,\label{eq2}\\
 y(b)& =\sum_{i=1}^{m}
\lambda_i\,\mathcal{I}_{a +}
^{\delta_i\,;\, \varphi}y(\tau_i), \qquad  \label{eq3}
\end{align}
where $J'=(a,\,b], $ $^H \mathcal{D}^{\mu,\nu;\, \varphi}_{a+}(\cdot)$ is the $\varphi$-Hilfer fractional derivative of order $\mu$ and type $\nu$, $\mathcal{I}_{a +}^{\delta_i;\, \varphi}(\cdot)$ is $\varphi$-Riemann--Liouville fractional integral of order $\delta_i>0$, $\lambda_i\in \R, \, i=1,2,\cdots,m,$\, $0\leq a\leq\tau_1<\tau_2<\tau_3<\cdots< \tau_m\leq b$ and   $f\in C(J' \times \R  \,, \R) $.

The $\varphi$-Hilfer implicit  FDEs with nonlocal integral boundary conditions considered in the present paper is the more broad class of  BVP that incorporates for different values of $\nu$ and $\varphi$ the class of BVP for implicit FDEs involving fractional derivative operators to be specific Riemann-Liouville, Caputo, Hadamard, Katugampola and other fractional derivative operators referenced in \cite{Vanterler1}. In particular, 
\begin{itemize}
\item for  $\varphi(t)=t$, the outcomes acquired in the present paper incorporates the results of \cite{Asawasamrit} for non-implicit Hilfer FDEs with nonlocal BVP.
\item for  $\nu=1,\,\varphi(t)=t, \delta_i=0, a=0, b=1$ and $\tau_1=\tau_2=\dots=\tau_{m-1}=0$, incorporates the results of \cite{Ahmad} for non-implicit Caputo FDEs with  nonlocal BVP.
\end{itemize}

The paper is structured in five sections as follows: 
In sections 2, we present the definitions and the results that are utilized in the paper. Section 3, provide an equivalent fractional integral equation to the nonlocal implicit BVP problem \eqref{eq1}-\eqref{eq3}. Section 4, deals with existence of solution for  nonlocal  BVP problem \eqref{eq1}-\eqref{eq3}. Ulam-Hyers stability and  Ulam-Hyers-Rassias stability has been examined in section 5. In section 6, we provided an example to justify our results. 
\section{Preliminaries} \label{preliminaries}
Let $\xi=\mu+\nu\left(2-\mu \right) $, $1<\mu<2, ~0\leq\nu\leq 1$.  Then $1< \xi \leq 2$. Let $\varphi\in C^{1}(J,\,\R)$ be an increasing function with $\varphi'(t)\neq 0$, for all $\, t\in J=[a,b]$.
 Consider the space 
\begin{equation*}
C_{2-\xi ;\, \varphi }(J,\,\R) =\left\{ \mathfrak{h}:\left( a,b\right]
\rightarrow \mathbb{R}~\big|~\left( \varphi \left( t\right) -\varphi \left(
a\right) \right) ^{2-\xi }\mathfrak{h}\left( t\right) \in C(J,\,\R)
\right\} ,
\end{equation*}
with the norm
\begin{equation}\label{space1}
\left\Vert \mathfrak{h}\right\Vert _{C_{2-\xi ;\,\varphi }\left(J,\,\R\right)  }=\underset{t\in J 
}{\max }\left\vert \left( \varphi \left( t\right) -\varphi \left( a\right) \right)
^{2-\xi }\mathfrak{h}\left( t\right) \right\vert.
\end{equation}
\begin{definition} [\cite{Kilbas}]
Let $\mu>0 ~(\mu \in \R)$, $\mathfrak{h} \in L_1(J,\,\R)$.  Then, the $\varphi$-Riemann--Liouville fractional integral of a function $\mathfrak{h}$ with respect to $\varphi$ is defined by 
\end{definition}
\begin{equation}\label{P1}
\mathcal{I}_{a+}^{\mu \, ;\,\varphi }\mathfrak{h}\left( t\right) =\frac{1}{\Gamma \left( \mu
\right) }\int_{a}^{t}\varphi'(s)(\varphi(t)-\varphi(s))^{\mu-1}\mathfrak{h}(s) \,ds.
\end{equation}

\begin{definition}[\cite{Vanterler1}]  
Let $n-1<\mu <n \in \N $ and $h \in C^{n}(J,\,\R)$. Then, the   $\varphi$-Hilfer fractional derivative $^{H}\mathcal{D}^{\mu,\,\nu\, ;\,\varphi}_{a+}(\cdot)$  of a function $\mathfrak{h}$ of order $\mu$ and type $0\leq \nu \leq 1$, is defined by
\begin{equation}\label{HIL}
^{H}\mathcal{D}_{a+}^{\mu ,\,\nu \, ;\,\varphi }\mathfrak{h}\left(t\right) =\mathcal{I}_{a+}^{\nu \left(
n-\mu \right) \, ;\,\varphi }\left( \frac{1}{\varphi ^{\prime }\left( t\right) }\frac{d}{dt}\right) ^{n}\mathcal{I}_{a+}^{\left( 1-\nu \right) \left( n-\mu
\right) \, ;\,\varphi }\mathfrak{h}\left( t\right).
\end{equation}
\end{definition}

\begin{Lemma}[\cite{Kilbas,Vanterler1}]\label{lema2} 
Let $\mu, \chi>0$ and $\delta>0$. Then
\begin{enumerate}
\item [(a)] $\mathcal{I}_{a+}^{\mu \, ;\,\varphi }\mathcal{I}_{a+}^{\chi \, ;\,\varphi }\mathfrak{h}(t)=\mathcal{I}_{a+}^{\mu+\chi \, ;\,\varphi }\mathfrak{h}(t)$
\item [(b)]$
\mathcal{I}_{a+}^{\mu \, ;\,\varphi }\left( \varphi \left( t\right) -\varphi \left( a\right)
\right) ^{\delta -1}=\frac{\Gamma \left( \delta \right) }{\Gamma \left(
\mu +\delta \right) }\left( \varphi \left(t\right) -\varphi \left( a\right)
\right) ^{\mu +\delta -1}.
$
\item [(c)]$^{H}\mathcal{D}_{a+}^{\mu ,\,\nu \, ;\,\varphi }\left( \varphi \left(t\right) -\varphi \left( a\right)
\right) ^{\xi -1}=0.
$
\end{enumerate}
\end{Lemma}

\begin{Lemma}[\cite{Vanterler1}]\label{lema1} 
If $\mu >0$ and \, $0\leq \alpha <1,$ then $I_{a+}^{\mu \, ;\varphi }(\cdot)$ is bounded from $C_{\alpha \, ;\varphi }\left[ a,b\right] $ to $C_{\alpha \, ;\varphi }\left[ a,b\right] .$ In addition, if $\alpha \leq \mu $, then $I_{a+}^{\mu \, ;\varphi }(\cdot)$ is bounded from $C_{\alpha \, ;\varphi }\left[ a,b\right] $ to $C\left[ a,b\right] $.
\end{Lemma}

\begin{Lemma}[\cite{Vanterler1}]\label{teo1} 
If $\mathfrak{h}\in C^{n}[a,b]$, $n-1<\mu<n$ and $0\leq \nu \leq 1$, then
\begin{enumerate}[topsep=0pt,itemsep=-1ex,partopsep=1ex,parsep=1ex]
\item $
I_{a+}^{\mu \, ;\varphi }\text{ }^{H}\mathbb{D}_{a+}^{\mu ,\nu \, ;\varphi }\mathfrak{h}\left( t\right) =\mathfrak{h}\left( t\right) -\overset{n}{\underset{k=1}{\sum }}\frac{\left( \varphi \left( t\right) -\varphi \left( a\right) \right) ^{\xi -k}}{\Gamma \left( \xi -k+1\right) }h_{\varphi }^{\left[ n-k\right] }I_{a+}^{\left( 1-\nu \right) \left( n-\mu \right) \, ;\varphi }h\left( a\right)$\\
where, $\mathfrak{h}_{\varphi }^{\left[ n-k\right] }\mathfrak{h}(t)=\left( \frac{1}{\varphi'(t)}\frac{d}{dt}\right)^{n-k}\mathfrak{h}(t)$.
\item $
^{H}\mathbb{D}_{a+}^{\mu ,\nu \, ;\varphi }I_{a+}^{\mu \, ;\varphi }\mathfrak{h}\left( t\right)
=\mathfrak{h}\left( t\right).
$
\end{enumerate}
\end{Lemma}

\begin{theorem}[\cite{Vanterler2}]\label{lema4}
 Let $u,$ $v$ be two integrable functions and $g$ be continuous with domain $\left[ a,b\right] .$ Let $\varphi \in C^{1}\left[ a,b\right] $ an increasing function such that $\varphi ^{\prime }\left( t\right) \neq 0$, $\forall ~t\in \left[ a,b\right]$. Assume that
\begin{enumerate}[topsep=0pt,itemsep=-1ex,partopsep=1ex,parsep=1ex]
\item $u$ and $v$ are nonnegative;
\item $g$ is nonnegative and nondecreasing.

\end{enumerate}

If
\begin{equation*}
u\left( t\right) \leq v\left( t\right) +g\left( t\right) \int_{a}^{t}\varphi
^{\prime }\left( s \right) \left( \varphi \left( t\right) -\varphi \left( s
\right) \right) ^{\mu -1}u\left( s \right) ds,
\end{equation*}
then
\begin{equation}\label{jose}
u\left( t\right) \leq v\left( t\right) +\int_{a}^{t}\overset{\infty }{%
\underset{k=1}{\sum }}\frac{\left[ g\left( t\right) \Gamma \left( \alpha
\right) \right] ^{k}}{\Gamma \left( k\alpha \right) }\varphi ^{\prime }\left(
s \right) \left[ \varphi \left( t\right) -\varphi \left(s \right) \right]
^{k\mu -1}v\left( s \right) ds,~t\in \left[ a,b\right].
\end{equation}
Further, if  $v$ is  a nondecreasing function on $[a,b]$ then
$$u(t)\leq v(t)\,E_{\mu}\left( g(t) \Gamma(\mu)\left(\varphi(t)-\varphi(a)\right)^{\mu}\right),$$
where $E_{\mu}(\cdot)$ is the Mittag-Leffler function of one parameter\cite{Gorenflo}, defined as
$$\mathcal{E}_{\mu}(z)=\sum_{k=0}^{\infty}\frac{z^k}{\Gamma(k \mu +1)}.$$
\end{theorem}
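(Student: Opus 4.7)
The plan is to proceed by Picard-style iteration of the hypothesis. Define the linear operator
$$Bw(t) = g(t)\int_a^t \varphi'(s)(\varphi(t)-\varphi(s))^{\mu-1} w(s)\,ds,$$
so the assumption reads $u \le v + Bu$. Iterating this gives $u \le \sum_{j=0}^{n-1} B^j v + B^n u$ for every $n \ge 1$, and the whole proof reduces to (i) controlling $B^k v$ term-by-term, and (ii) showing $B^n u \to 0$.

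The heart of the argument is the inductive claim
$$B^k v(t) \le \frac{[g(t)\Gamma(\mu)]^k}{\Gamma(k\mu)}\int_a^t \varphi'(s)(\varphi(t)-\varphi(s))^{k\mu-1} v(s)\,ds.$$
The inductive step uses monotonicity of $g$ to bound the inner factor $g(s)\le g(t)$ and pull the $g$-power outside, then Fubini to interchange the two integrations, and finally the $\varphi$-adapted Beta identity
$$\int_r^t \varphi'(s)(\varphi(t)-\varphi(s))^{\mu-1}(\varphi(s)-\varphi(r))^{(k-1)\mu-1}\,ds = \frac{\Gamma(\mu)\Gamma((k-1)\mu)}{\Gamma(k\mu)}(\varphi(t)-\varphi(r))^{k\mu-1},$$
which reduces to the classical Beta integral via the substitution $\sigma=(\varphi(s)-\varphi(r))/(\varphi(t)-\varphi(r))$, legitimate because $\varphi$ is $C^1$ and strictly increasing. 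The same bound applied with $v$ replaced by $u$ shows $B^n u(t) \le \|g\|_\infty^n \Gamma(\mu)^n (\varphi(t)-\varphi(a))^{n\mu-1} \|\varphi'\|_\infty \|u\|_{L^1}/\Gamma(n\mu) \to 0$, since $\Gamma(n\mu)$ grows superexponentially. Letting $n\to\infty$ in the iterated inequality yields \eqref{jose}.

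For the Mittag-Leffler refinement, monotonicity of $v$ permits replacing $v(s)$ by $v(t)$ inside the remaining integral; combined with
$$\int_a^t \varphi'(s)(\varphi(t)-\varphi(s))^{k\mu-1}\,ds = \frac{(\varphi(t)-\varphi(a))^{k\mu}}{k\mu},$$
the $k$-th term becomes $v(t)[g(t)\Gamma(\mu)(\varphi(t)-\varphi(a))^\mu]^k/\Gamma(k\mu+1)$, and summing over $k\ge 0$ is exactly the definition of $E_\mu$. I expect the main obstacle to be the induction step for $B^k v$: the Fubini interchange and the $\varphi$-Beta computation in the presence of the variable multiplier $g$ require careful bookkeeping, but once the Beta identity is set up the remainder of the proof is assembly.
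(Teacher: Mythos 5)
Your proposal is correct and is essentially the proof the paper relies on: the paper states this Gronwall inequality without proof, citing Sousa--Oliveira \cite{Vanterler2}, and the argument there is exactly your Picard iteration of $u\le v+Bu$, the induction on $B^k v$ via $g(s)\le g(t)$, Fubini, and the $\varphi$-adapted Beta identity, the vanishing of $B^n u$ from the superexponential growth of $\Gamma(n\mu)$, and the $v(s)\le v(t)$ substitution to assemble the Mittag--Leffler bound. The only point worth a remark is that the estimate $(\varphi(t)-\varphi(s))^{n\mu-1}\le(\varphi(t)-\varphi(a))^{n\mu-1}$ used for $B^n u$ needs $n\mu\ge 1$, which holds for all sufficiently large $n$ and hence does not affect the limit.
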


\begin{theorem}[\cite{zhou}, Krasnoselskii] \label{Krasnoselskii}

Let $\mathcal{M}$ be a closed, convex, and nonempty subset of
a Banach space $\mathcal{X}$, and $P$, $Q$ the operators such that
\begin{enumerate}[topsep=0pt,itemsep=-1ex,partopsep=1ex,parsep=1ex]
\item $Px + Qy \in \mathcal{M}$ whenever $x, y \in \mathcal{M}$;
\item $P$ is a contraction mapping;
\item $Q$ is compact and continuous.
\end{enumerate}
Then, there exists ${y}^\ast\in \mathcal{M}$ such that ${y}^\ast=P{y}^\ast+Q{y}^\ast.$
\end{theorem}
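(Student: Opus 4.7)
The plan is to reduce Krasnoselskii's theorem to a combination of the Banach contraction principle and Schauder's fixed point theorem, via the standard parametrized-fixed-point construction. First, for each fixed $y\in\mathcal{M}$, I would consider the auxiliary map $T_y:\mathcal{M}\to\mathcal{X}$ defined by $T_y(x)=Px+Qy$. Hypothesis (1) ensures $T_y(\mathcal{M})\subseteq\mathcal{M}$, and hypothesis (2) gives a constant $k\in[0,1)$ with $\|Px_1-Px_2\|\le k\|x_1-x_2\|$, so $T_y$ inherits the same contraction constant. Since $\mathcal{M}$ is a closed subset of the Banach space $\mathcal{X}$, it is a complete metric space, so Banach's contraction principle applies and yields a unique fixed point of $T_y$; call it $g(y)$. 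This produces a map $g:\mathcal{M}\to\mathcal{M}$ satisfying
\[
g(y)=Pg(y)+Qy \qquad\text{for all } y\in\mathcal{M}.
\]

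Next I would transfer continuity and compactness from $Q$ to $g$ via a resolvent-type estimate. For any $y_1,y_2\in\mathcal{M}$, subtracting the defining identities gives
\[
\|g(y_1)-g(y_2)\|\le \|Pg(y_1)-Pg(y_2)\|+\|Qy_1-Qy_2\|\le k\|g(y_1)-g(y_2)\|+\|Qy_1-Qy_2\|,
\]
so $\|g(y_1)-g(y_2)\|\le\tfrac{1}{1-k}\|Qy_1-Qy_2\|$. Continuity of $g$ is then immediate from continuity of $Q$. For relative compactness of $g(\mathcal{M})$, take any sequence $(y_n)\subset\mathcal{M}$; since $Q$ is compact, $(Qy_n)$ admits a Cauchy subsequence, and the inequality above forces the corresponding subsequence of $(g(y_n))$ to be Cauchy, hence convergent in the closed set $\mathcal{M}$.

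With $g:\mathcal{M}\to\mathcal{M}$ continuous, $\mathcal{M}$ nonempty, closed and convex, and $g(\mathcal{M})$ relatively compact, Schauder's fixed point theorem applies and produces $y^{\ast}\in\mathcal{M}$ with $g(y^{\ast})=y^{\ast}$. Substituting into the defining relation yields $y^{\ast}=Pg(y^{\ast})+Qy^{\ast}=Py^{\ast}+Qy^{\ast}$, which is the desired conclusion. The only genuinely delicate point in this plan is hypothesis (1): without the assumption that $Px+Qy\in\mathcal{M}$ for \emph{all} $x,y\in\mathcal{M}$, the parametrized fixed point $g(y)$ would not a priori lie in $\mathcal{M}$, and the Schauder step could not be set up on the closed convex set $\mathcal{M}$ itself. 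Everything else reduces to the contraction-resolvent estimate and two classical fixed point theorems.
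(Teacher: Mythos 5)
The paper does not prove this result at all: it is quoted verbatim from the cited reference \cite{zhou} as a known tool, so there is no internal proof to compare against. Your argument is the classical proof of Krasnoselskii's theorem --- solving $x = Px + Qy$ for each fixed $y$ by the Banach contraction principle on the complete set $\mathcal{M}$, deriving the resolvent estimate $\|g(y_1)-g(y_2)\|\le (1-k)^{-1}\|Qy_1-Qy_2\|$ to transfer continuity and compactness from $Q$ to $g$, and then applying Schauder on the nonempty closed convex set $\mathcal{M}$ --- and it is correct as written, including your observation that hypothesis (1) is exactly what keeps the parametrized fixed point inside $\mathcal{M}$.
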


\section{Equivalent Fractional Integral Equation}
In this section, we derive equivalent fractional integral equation to  the nonlocal BVP  \eqref{eq1}-\eqref{eq3}.

\begin{theorem}\label{thm3.1}
Let $1<\mu<2, ~0\leq\nu\leq 1$ and $h: J'  \to \R $ be a continuous function. Then the nonlocal  BVP for $\varphi$-Hilfer  FDEs
\begin{align}
^H \mathcal{D}^{\mu,\,\nu\,;\, \varphi}_{a +}y(t)
&=h(t),~~ ~t \in  J'=(a,\,b],  ~\label{eq4}\\
y(a)&=0,\label{eq5}\\
y(b)&=\sum_{i=1}^{m}
\lambda_i\,\mathcal{I}_{a +}
^{\delta_i\,;\, \varphi}y(\tau_i)\label{eq6}
\end{align}
is equivalent to 
$$
y(t)=\frac{( \varphi (t)-\varphi (a))^{\xi -1}}{\Lambda \Gamma(\xi)}\left[\sum_{i=1}^{m}
\lambda_i\,\mathcal{I}_{a +}
^{\mu+\delta_i\,;\, \varphi}h(\tau_i)-\mathcal{I}_{a +}
^{\mu\,;\, \varphi}h(b) \right]+\mathcal{I}_{a +}
^{\mu\,;\, \varphi}h(t), \, t\in J,
$$
where $\lambda_i\in\R (i=1, 2, \cdots, m)$ are the constants such that 
\begin{equation}\label{a1}
\Lambda=\frac{( \varphi (b)-\varphi (a))^{\xi -1}}{\Gamma(\xi)}-\sum_{i=1}^{m}
\frac{\lambda_i}{\Gamma(\xi+\delta_i)}( \varphi (\tau_i)-\varphi (a))^{\xi+\delta_i -1}\neq0.
\end{equation}

\end{theorem}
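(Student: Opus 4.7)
My approach will be a standard bidirectional argument: first integrate the fractional differential equation to obtain a general representation, then pin down the two free constants using the boundary conditions.

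For the forward direction, I would start by applying $\mathcal{I}_{a+}^{\mu\,;\,\varphi}$ to both sides of \eqref{eq4} and invoking Lemma \ref{teo1}(1) with $n=2$ (so that the sum involves the two exponents $\xi-1$ and $\xi-2$). This gives
\begin{equation*}
y(t)=\mathcal{I}_{a+}^{\mu\,;\,\varphi}h(t)+c_{1}\,\frac{(\varphi(t)-\varphi(a))^{\xi-1}}{\Gamma(\xi)}+c_{2}\,\frac{(\varphi(t)-\varphi(a))^{\xi-2}}{\Gamma(\xi-1)},
\end{equation*}
where $c_{1},c_{2}\in\R$ collect the values $y_{\varphi}^{[2-k]}\mathcal{I}_{a+}^{(1-\nu)(2-\mu)\,;\,\varphi}y(a)$. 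Next I would use the condition $y(a)=0$ from \eqref{eq5}: since $\xi\in(1,2]$ the exponent $\xi-2$ is strictly negative whenever $\xi<2$, so the third term blows up at $t=a$ unless $c_{2}=0$; in the boundary case $\xi=2$ one has $(\varphi(t)-\varphi(a))^{\xi-2}\equiv 1$, and evaluating at $t=a$ again forces $c_{2}=0$. In either case the general representation reduces to $y(t)=\mathcal{I}_{a+}^{\mu\,;\,\varphi}h(t)+c_{1}\,(\varphi(t)-\varphi(a))^{\xi-1}/\Gamma(\xi)$.

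To determine $c_{1}$, I would apply $\mathcal{I}_{a+}^{\delta_{i}\,;\,\varphi}$ to this expression and evaluate at $\tau_{i}$, using Lemma \ref{lema2}(a) for the semigroup property $\mathcal{I}_{a+}^{\delta_{i}\,;\,\varphi}\mathcal{I}_{a+}^{\mu\,;\,\varphi}h=\mathcal{I}_{a+}^{\mu+\delta_{i}\,;\,\varphi}h$ and Lemma \ref{lema2}(b) to compute
\begin{equation*}
\mathcal{I}_{a+}^{\delta_{i}\,;\,\varphi}(\varphi(\tau_{i})-\varphi(a))^{\xi-1}=\frac{\Gamma(\xi)}{\Gamma(\xi+\delta_{i})}(\varphi(\tau_{i})-\varphi(a))^{\xi+\delta_{i}-1}.
\end{equation*}
Substituting into \eqref{eq6} and isolating $c_{1}$ produces, after collecting terms, the equation $c_{1}\Lambda=\sum_{i=1}^{m}\lambda_{i}\mathcal{I}_{a+}^{\mu+\delta_{i}\,;\,\varphi}h(\tau_{i})-\mathcal{I}_{a+}^{\mu\,;\,\varphi}h(b)$ with $\Lambda$ exactly as in \eqref{a1}. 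The nondegeneracy assumption $\Lambda\neq 0$ then yields $c_{1}$ and hence the claimed formula for $y$.

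For the converse direction I would start from the stated integral representation and verify the three items directly. Applying $^{H}\mathcal{D}_{a+}^{\mu,\nu\,;\,\varphi}$ term by term, Lemma \ref{teo1}(2) handles the summand $\mathcal{I}_{a+}^{\mu\,;\,\varphi}h(t)$, returning $h(t)$, while Lemma \ref{lema2}(c) kills the $(\varphi(t)-\varphi(a))^{\xi-1}$ term, so \eqref{eq4} holds. Evaluating at $t=a$ confirms $y(a)=0$ (both terms vanish since $\xi-1>0$ and $\mathcal{I}_{a+}^{\mu\,;\,\varphi}h(a)=0$), and direct substitution into the nonlocal boundary term followed by the same identities used in the forward direction recovers \eqref{eq6}.

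The main obstacle is the careful handling of the $c_{2}$-term in the general representation; because the exponent $\xi-2$ can be negative, the justification for setting $c_{2}=0$ requires interpreting $y(a)=0$ correctly in the weighted space $C_{2-\xi\,;\,\varphi}(J,\R)$ rather than treating it as a routine substitution. Everything else is bookkeeping with the identities collected in Lemmas \ref{lema2} and \ref{teo1}.
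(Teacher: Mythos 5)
Your proposal is correct and follows essentially the same route as the paper: integrate with Lemma \ref{teo1}(1) to get the two-constant representation, kill the $(\varphi(t)-\varphi(a))^{\xi-2}$ term via $y(a)=0$ and the blow-up of that power, determine $C_1$ from the nonlocal condition using Lemma \ref{lema2}(a)--(b) to arrive at $\Lambda$, and verify the converse with Lemma \ref{lema2}(c) and Lemma \ref{teo1}(2). Your explicit handling of the boundary case $\xi=2$ (where the exponent $\xi-2$ does not produce a singularity) is a small refinement the paper glosses over, but it does not change the argument.
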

\begin{proof}
Assume that $y$ is the solution of the nonlocal  BVP for $\varphi$-Hilfer  FDEs \eqref{eq4}-\eqref{eq6}. Operating $\varphi$-fractional integral $\mathcal{I}_{a +}
^{\mu\,;\, \varphi}$ on both sides of equation \eqref{eq4} and  using  Lemma \ref{teo1}, we obtain
$$
y\left( t\right) -\sum_{k=1}^{2}\frac{\left( \varphi \left( t\right) -\varphi \left( a\right) \right) ^{\xi -k}}{\Gamma \left( \xi -k+1\right) }y_{\varphi }^{\left[ 2-k\right] }\mathcal{I}_{a+}^{\left( 1-\nu \right) \left( 2-\mu \right)  \, ;\varphi }y\left( a\right)
=\mathcal{I}_{a+}^{\mu \, ;\varphi }h(t), \, t\in J.
$$
But $\left( 1-\nu \right) \left( 2-\mu \right)=2-\xi$. Therefore,
\begin{align*}
y\left( t\right) 
&=\frac{\left( \varphi \left( t\right) -\varphi \left( a\right) \right) ^{\xi -1}}{\Gamma \left( \xi \right) }\left( \frac{1}{\varphi'(t)}\frac{d}{dt}\right) \mathcal{I}_{a+}^{2 -\xi  \, ;\varphi }y\left( t\right)|_{t=a}\\
&\qquad+\frac{\left( \varphi \left( t\right)-\varphi \left( a\right) \right) ^{\xi -2}}{\Gamma \left( \xi-1 \right) }\mathcal{I}_{a+}^{2 -\xi  \, ;\varphi }y\left( t\right)|_{t=a}+\mathcal{I}_{a+}^{\mu \, ;\varphi }h(t)\\
&=\frac{\left( \varphi \left( t\right) -\varphi \left( a\right) \right) ^{\xi -1}}{\Gamma \left( \xi \right) }\,  ^H\mathcal{D}^{\xi-1,\,\nu\,;\, \varphi}_{a +}y\left( t\right)|_{t=a}\\
&\qquad+\frac{\left( \varphi \left( t\right)-\varphi \left( a\right) \right) ^{\xi -2}}{\Gamma \left( \xi-1 \right) }\mathcal{I}_{a+}^{2 -\xi  \, ;\varphi }y\left( t\right)|_{t=a}+\mathcal{I}_{a+}^{\mu \, ;\varphi }h(t).
\end{align*}
Set 
$$
C_1= ^H\mathcal{D}^{\xi-1,\,\nu\,;\, \varphi}_{a +}y\left( t\right)|_{t=a}\quad\text{and}\quad C_2=\mathcal{I}_{a+}^{2 -\xi  \, ;\varphi }y\left( t\right)|_{t=a},\, t\in J.
$$
Then,
\begin{equation}\label{35}
y\left( t\right) =C_1 \,\frac{\left( \varphi \left( t\right) -\varphi \left( a\right) \right) ^{\xi -1}}{\Gamma \left( \xi \right) }\,  + C_2\,\frac{\left( \varphi \left( t\right)-\varphi \left( a\right) \right) ^{\xi -2}}{\Gamma \left( \xi-1 \right) }+\mathcal{I}_{a+}^{\mu \, ;\varphi }h(t), \, t\in J.
\end{equation}
Since $\lim\limits_{t\rightarrow a}\left( \varphi \left( t\right)-\varphi \left( a\right) \right) ^{\xi -2}=\infty,$ in the view of  boundary condition \eqref{eq5}, we must have $C_2=0.$ In this case \eqref{35} becomes
\begin{equation}\label{3.4}
y\left( t\right) =C_1 \,\frac{\left( \varphi \left( t\right) -\varphi \left( a\right) \right) ^{\xi -1}}{\Gamma \left( \xi \right) }\, +\mathcal{I}_{a+}^{\mu \, ;\varphi }h(t), \, t\in J.
\end{equation}
 Next, to determine the constant $C_1$,  we utilize  the boundary condition \eqref{eq6}. Operating $\mathcal{I}_{a +}
 ^{\delta_i\,;\, \varphi}$ on both sides of equation \eqref{3.4},   we obtain
 \begin{align}\label{37}
\mathcal{I}_{a +}^{\delta_i\,;\,\varphi}y(t)
&=\frac{C_1}{\Gamma \left( \xi+\delta_i \right)}\,\left( \varphi \left( t\right) -\varphi \left( a\right) \right) ^{\xi+\delta_i -1}+\mathcal{I}_{a +}^{\mu +\delta_i\,;\,\varphi}h(t).
 \end{align}
From \eqref{eq6} and \eqref{37} we have 
\begin{align}\label{38}
y(b)&=\sum_{i=1}^{m}\lambda_i\,\mathcal{I}_{a +}^{\delta_i\,;\,\varphi}y(\tau_i)\nonumber\\
&=C_1\sum_{i=1}^{m}\frac{\lambda_i}{\Gamma \left( \xi+\delta_i \right)}\left( \varphi \left( \tau_i\right) -\varphi \left( a\right) \right) ^{\xi+\delta_i -1}+\sum_{i=1}^{m}
\lambda_i\,\mathcal{I}_{a +}^{\mu +\delta_i\,;\,\varphi}h(\tau_i).
\end{align}
But from \eqref{3.4} and \eqref{38},  we have
\begin{align}\label{39}
& C_1 \,\frac{\left( \varphi \left(b\right) -\varphi \left( a\right) \right) ^{\xi -1}}{\Gamma \left( \xi \right) }\, +\mathcal{I}_{a+}^{\mu \, ;\varphi }h(b) \nonumber\\
& \qquad=C_1\sum_{i=1}^{m}\frac{\lambda_i}{\Gamma \left( \xi+\delta_i \right)}\left( \varphi \left( \tau_i\right) -\varphi \left( a\right) \right) ^{\xi+\delta_i -1}+\sum_{i=1}^{m}\lambda_i\,\mathcal{I}_{a +}^{\mu +\delta_i\,;\,\varphi}h(\tau_i),
\end{align}
Since $\lambda_i\in\R (i=1, 2, \cdots, m)$ are the constants such that 
$$
\Lambda=\frac{( \varphi (b)-\varphi (a))^{\xi -1}}{\Gamma(\xi)}-\sum_{i=1}^{m}
\frac{\lambda_i}{\Gamma(\xi+\delta_i)}( \varphi (\tau_i)-\varphi (a))^{\xi+\delta_i -1}\neq0,
$$
the equation \eqref{39} can be written as
$$
C_1=\frac{1}{\Lambda}\left[ \sum_{i=1}^{m}\lambda_i\,\mathcal{I}_{a +}^{\mu +\delta_i\,;\,\varphi}h(\tau_i)-\mathcal{I}_{a+}^{\mu \, ;\varphi }h(b)\right].
$$
Thus, equation \eqref{3.4} takes the form
\begin{equation}\label{3.7}
y(t)=\frac{( \varphi (t)-\varphi (a))^{\xi -1}}{\Lambda \Gamma(\xi)}\left[\sum_{i=1}^{m}
\lambda_i\,\mathcal{I}_{a +}
^{\mu+\delta_i\,;\, \varphi}h(\tau_i)-\mathcal{I}_{a +}
^{\mu\,;\, \varphi}h(b) \right]+\mathcal{I}_{a +}
^{\mu\,;\, \varphi}h(t),\, t\in J.
\end{equation}
Which is the desired equivalent fractional integral equation to the problem \eqref{eq4}-\eqref{eq6}.

Conversely, suppose that $y$ is the solution of the fractional integral equation \eqref{3.7}. Operating fractional derivative $^H \mathcal{D}^{\mu,\,\nu\,;\, \varphi}_{a +}$
 on both sides of equation \eqref{3.7} and using the Lemma \ref{lema2} and Lemma \ref{teo1},  we obtain
 \begin{align}\label{311}
^H \mathcal{D}^{\mu,\,\nu\,;\, \varphi}_{a +}y(t)
&=\frac{1}{\Lambda \Gamma(\xi)}\left[\sum_{i=1}^{m}
\lambda_i\,\mathcal{I}_{a +}
^{\mu+\delta_i\,;\, \varphi}h(\tau_i)-\mathcal{I}_{a +}
^{\mu\,;\, \varphi}h(b) \right]\,^H \mathcal{D}^{\mu,\,\nu\,;\, \varphi}_{a +}( \varphi (t)-\varphi (a))^{\xi -1}\nonumber\\
&\qquad+^H \mathcal{D}^{\mu,\,\nu\,;\, \varphi}_{a +}\mathcal{I}_{a +}^{\mu\,;\, \varphi}h(t)\nonumber\\
&=h(t),\, t\in J.
 \end{align}
 This proves  $y$ satisfies the  equation  \eqref{eq4}. Next, we prove that $y$ given by  \eqref{3.7} verifies the boundary conditions. From \eqref{3.7}, clearly
\begin{equation}\label{312}
  y(a)=0.
\end{equation}
  Now we prove that  $y$ satisfy the boundary condition \eqref{eq6}. For each $ i\,(i=1, 2, \cdots, m)$, from   equation \eqref{3.7}  we have
  \begin{align*}
 \mathcal{I}_{a +}^{\delta_i\,;\,\varphi}y(t)
 &=\frac{1}{\Lambda }\left[\sum_{i=1}^{m}
  \lambda_i\,\mathcal{I}_{a +}
  ^{\mu+\delta_i\,;\, \varphi}h(\tau_i)-\mathcal{I}_{a +}
  ^{\mu\,;\, \varphi}h(b) \right]\frac{\left( \varphi \left( t\right) -\varphi \left( a\right) \right) ^{\xi+\delta_i -1}}{\Gamma \left( \xi+\delta_i \right)}\,+\mathcal{I}_{a +}^{\mu +\delta_i\,;\,\varphi}h(t).
 \end{align*} 
 Therefore,
 \begin{align}\label{3.9}
\sum_{i=1}^{m}\lambda_i\,\mathcal{I}_{a+}^{\delta_i\,;\,\varphi}y(\tau_i)
&=\frac{1}{\Lambda}\left[\sum_{i=1}^{m}\lambda_i\,\mathcal{I}_{a+}^{\mu+\delta_i\,;\,\varphi}h(\tau_i)-\mathcal{I}_{a+}^{\mu\,;\,\varphi}h(b)\right]\sum_{i=1}^{m}\frac{\lambda_i}{\Gamma \left( \xi+\delta_i \right)}\left( \varphi \left( \tau_i\right) -\varphi \left( a\right) \right) ^{\xi+\delta_i-1}\nonumber\\
&\qquad+\sum_{i=1}^{m}\lambda_i\,\mathcal{I}_{a +}^{\mu +\delta_i\,;\,\varphi}h(\tau_i).
\end{align}
But from \eqref{a1}, we have
$$
\sum_{i=1}^{m}
\frac{\lambda_i}{\Gamma(\xi+\delta_i)}( \varphi (\tau_i)-\varphi (a))^{\xi+\delta_i -1}
=\frac{( \varphi (b)-\varphi (a))^{\xi -1}}{\Gamma(\xi)}-\Lambda.
$$
Thus, equation \eqref{3.9} reduces to
\begin{align}\label{3.10}
\sum_{i=1}^{m}\lambda_i\,\mathcal{I}_{a+}^{\delta_i\,;\,\varphi}y(\tau_i)
&=\frac{1}{\Lambda}\left[\sum_{i=1}^{m}\lambda_i\,\mathcal{I}_{a+}^{\mu+\delta_i\,;\,\varphi}h(\tau_i)-\mathcal{I}_{a+}^{\mu\,;\,\varphi}h(b)\right]\left( \frac{( \varphi (b)-\varphi (a))^{\xi -1}}{\Gamma(\xi)}-\Lambda\right) \nonumber\\
&\qquad+\sum_{i=1}^{m}\lambda_i\,\mathcal{I}_{a +}^{\mu +\delta_i\,;\,\varphi}h(\tau_i)\nonumber\\
&=\frac{1}{\Lambda}\left[\sum_{i=1}^{m}\lambda_i\,\mathcal{I}_{a+}^{\mu+\delta_i\,;\,\varphi}h(\tau_i)-\mathcal{I}_{a+}^{\mu\,;\,\varphi}h(b)\right] \frac{( \varphi (b)-\varphi (a))^{\xi -1}}{\Gamma(\xi)}+\mathcal{I}_{a+}^{\mu\,;\,\varphi}h(b).
\end{align} 
Now from \eqref{3.7}, we have 
\begin{equation}\label{3.8}
 y(b)=\frac{( \varphi (b)-\varphi (a))^{\xi -1}}{\Lambda \Gamma(\xi)}\left[\sum_{i=1}^{m}
 \lambda_i\,\mathcal{I}_{a +}
 ^{\mu+\delta_i\,;\, \varphi}h(\tau_i)-\mathcal{I}_{a +}
 ^{\mu\,;\, \varphi}h(b) \right]+\mathcal{I}_{a +}
 ^{\mu\,;\, \varphi}h(b).
\end{equation}
From equations \eqref{3.10} and \eqref{3.8} we obtain 
\begin{equation}\label{3.16}
y(b)=\sum_{i=1}^{m}\lambda_i\,\mathcal{I}_{a +}
^{\delta_i\,;\, \varphi}y(\tau_i).
\end{equation}From  \eqref{311}, \eqref{312} and \eqref{3.16}, it follows that the $y$ defined by  \eqref{3.7} satisfies the problem \eqref{eq4}-\eqref{eq6}.
\end{proof}

\begin{theorem}\label{thm3.2}
Let $f: J'\times \R  \to \R $ be a continuous function such that $f\left( \cdot, y(\cdot), ^H \mathcal{D}^{\mu,\,\nu\,;\, \varphi}_{a +}y(\cdot)\right)\in C_{2-\xi;\,\varphi} (J, \R)$ for each $y \in C_{2-\xi;\,\varphi} (J, \R)$. Then, the nonlocal   BVP for $\varphi$-Hilfer implicit FDEs \eqref{eq1}-\eqref{eq3} is equivalent to the fractional integral equation
\begin{equation}\label{3.12}
y(t)=(\varphi(t)-\varphi(a))^{\xi-1}\tilde{A}_y+\mathcal{I}_{a +}^{\mu\,;\, \varphi}g_y(t),
\end{equation}
where $g_y(\cdot)\in C_{2-\xi;\,\varphi} (J, \R)$ satisfies the functional equation 
\begin{equation}\label{3.13}
g_y(t)=f\left( t, (\varphi(t)-\varphi(a))^{\xi-1}\tilde{A}_y+\mathcal{I}_{a +}^{\mu\,;\, \varphi}g_y(t), g_y(t) \right), \quad t\in J 
\end{equation}
and 
\begin{equation}\label{3.14}
\tilde{A}_y=\frac{1}{\Lambda\Gamma(\xi)}\left[\sum_{i=1}^{m}\lambda_i\mathcal{I}_{a+}^{\mu+\delta_i\,;\,\varphi}g_y(\tau_i)-\mathcal{I}_{a +}^{\mu\,;\, \varphi}g_y(b) \right].
\end{equation}
\end{theorem}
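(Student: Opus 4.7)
The plan is to reduce Theorem \ref{thm3.2} to Theorem \ref{thm3.1} by introducing the auxiliary function $g_y(t) := {}^{H}\mathcal{D}^{\mu,\nu;\varphi}_{a+}y(t)$, which converts the implicit problem into a linear one (with forcing term $g_y$) plus a fixed-point identity. The hypothesis $f(\cdot, y(\cdot), {}^{H}\mathcal{D}^{\mu,\nu;\varphi}_{a+}y(\cdot)) \in C_{2-\xi;\varphi}(J,\R)$ guarantees that $g_y$ lives in the required weighted space, which is what is needed to apply Theorem \ref{thm3.1}.

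For the forward direction, suppose $y \in C_{2-\xi;\varphi}(J,\R)$ solves \eqref{eq1}--\eqref{eq3}. Setting $g_y(t) = {}^{H}\mathcal{D}^{\mu,\nu;\varphi}_{a+}y(t)$, equation \eqref{eq1} becomes $g_y(t) = f(t,y(t),g_y(t))$, so by hypothesis $g_y \in C_{2-\xi;\varphi}(J,\R)$. The function $y$ then solves the linear BVP
\begin{equation*}
{}^{H}\mathcal{D}^{\mu,\nu;\varphi}_{a+}y(t) = g_y(t), \qquad y(a)=0, \qquad y(b)=\sum_{i=1}^{m}\lambda_i\,\mathcal{I}_{a+}^{\delta_i;\varphi}y(\tau_i).
\end{equation*}
Applying Theorem \ref{thm3.1} with $h$ replaced by $g_y$ yields precisely the representation \eqref{3.12} with $\tilde{A}_y$ given by \eqref{3.14}; substituting \eqref{3.12} back into $g_y(t)=f(t,y(t),g_y(t))$ gives the functional equation \eqref{3.13}.

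For the converse, suppose $y$ is given by \eqref{3.12} with $g_y \in C_{2-\xi;\varphi}(J,\R)$ satisfying \eqref{3.13} and $\tilde{A}_y$ given by \eqref{3.14}. Then $y$ is of exactly the form produced by the ``$\Leftarrow$'' part of Theorem \ref{thm3.1} applied to $h = g_y$, so that theorem immediately gives
\begin{equation*}
{}^{H}\mathcal{D}^{\mu,\nu;\varphi}_{a+}y(t) = g_y(t), \qquad y(a)=0, \qquad y(b)=\sum_{i=1}^{m}\lambda_i\,\mathcal{I}_{a+}^{\delta_i;\varphi}y(\tau_i),
\end{equation*}
so boundary conditions \eqref{eq2} and \eqref{eq3} hold automatically. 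Finally, inserting the identity ${}^{H}\mathcal{D}^{\mu,\nu;\varphi}_{a+}y(t) = g_y(t)$ and the expression \eqref{3.12} for $y(t)$ into \eqref{3.13} yields
\begin{equation*}
{}^{H}\mathcal{D}^{\mu,\nu;\varphi}_{a+}y(t) = g_y(t) = f\bigl(t, y(t), {}^{H}\mathcal{D}^{\mu,\nu;\varphi}_{a+}y(t)\bigr),
\end{equation*}
which is \eqref{eq1}.

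Most of the work is really done by Theorem \ref{thm3.1}; the only genuinely new ingredient here is the bookkeeping that identifies the ``free function'' $g_y$ with the fractional derivative of $y$ in a consistent way. The main conceptual obstacle I would flag is ensuring $g_y$ belongs to $C_{2-\xi;\varphi}(J,\R)$ so that the fractional integrals $\mathcal{I}_{a+}^{\mu;\varphi}g_y$ and $\mathcal{I}_{a+}^{\mu+\delta_i;\varphi}g_y$ appearing in \eqref{3.12}--\eqref{3.14} are well defined and sit in the correct function space; this is exactly why the hypothesis on $f$ is stated at the level of $C_{2-\xi;\varphi}(J,\R)$, and Lemma \ref{lema1} then delivers the needed boundedness to carry the derivative through the integral in the converse direction.
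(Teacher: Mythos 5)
Your proof is correct and follows essentially the same route as the paper: both reduce Theorem \ref{thm3.2} to Theorem \ref{thm3.1} by identifying $g_y$ with ${}^{H}\mathcal{D}^{\mu,\nu;\varphi}_{a+}y$ and treating it as the forcing term $h$. If anything, your write-up is more complete than the paper's, which only verifies the converse direction explicitly (differentiating \eqref{3.12} and deferring the boundary conditions to the argument of Theorem \ref{thm3.1}) and leaves the forward direction implicit.
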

\begin{proof} Assume that  $y\in C_{2-\xi;\,\varphi} (J, \R)$ is the solution of integral equation \eqref{3.12}.
Operating $^H \mathcal{D}^{\mu,\,\nu\,;\, \varphi}_{a +}$ on both sides of \eqref{3.12} and using the Lemma \ref{lema2} and Lemma \ref{teo1}, we obtain
\begin{align*}
^H \mathcal{D}^{\mu,\,\nu\,;\, \varphi}_{a +}y(t)
&=\tilde{A}_y\,^H \mathcal{D}^{\mu,\,\nu\,;\, \varphi}_{a +}(\varphi(t)-\varphi(a))^{\xi-1}+^H\mathcal{D}^{\mu,\,\nu\,;\, \varphi}_{a +}\,\mathcal{I}_{a +}^{\mu\,;\, \varphi}g_y(t)\\
&=g_y(t)\\
&=f\left( t, (\varphi(t)-\varphi(a))^{\xi-1}\tilde{A}_y+\mathcal{I}_{a +}^{\mu\,;\, \varphi}g_y(t), g_y(t) \right)\\
&=f\left( t, y(t), ^H \mathcal{D}^{\mu,\,\nu\,;\, \varphi}_{a +}y(t)\right).
\end{align*}
Thus, $y$ satisfies the equation \eqref{eq1}. The proof of the function 
$y$ given by \eqref{3.12} satisfies the boundary conditions \eqref{eq2} and \eqref{eq3} can be completed similarly as in the proof of Theorem \ref{thm3.1} with $h(t)$ replaced by $g_y(t)$.
\end{proof}
\section{Existence Result}
In this section we derive  existence result for nonlocal implicit BVP \eqref{eq1}-\eqref{eq3}. 
\begin{theorem}\label{th4.1}
Assume the following hypothesis hold:\\
$(H1)$\,$f: J'\times \R  \to \R $ be a continuous function such that $f\left( \cdot, y(\cdot), ^H \mathcal{D}^{\mu,\,\nu\,;\, \varphi}_{a +}y(\cdot)\right)\in C_{2-\xi;\,\varphi} (J, \R)$ for each $y \in C_{2-\xi;\,\varphi} (J, \R)$ that  satisfy the following Lipschitz type condition
$$
\left| f(t, x_1, y_1)-f(t, x_2, y_2)\right| \leq K \left|x_1-x_2 \right| + L \left|y_1-y_2 \right|,\,t\in J,
$$
where $x_1, x_2, y_1, y_2\in \R,\, K>0 $ and $0<L<1.$
Then, nonlocal   BVP for $\varphi$-Hilfer implicit FDEs \eqref{eq1}-\eqref{eq3} has at least one solution, provided 
\begin{align}\label{41}
\sigma&=\frac{K\Gamma(\xi-1)(\varphi(b)-\varphi(a))^{\mu}}{1-L}\left\lbrace\frac{(\varphi(b)-\varphi(a))^{\xi-1}}{\Gamma(\xi)\Lambda}\Omega +\frac{1}{\Gamma(\xi+\mu-1)} \right\rbrace <1,
\end{align}
where 
\begin{align}\label{42}
\Omega&=\sum_{i=1}^{m}
\frac{\lambda_i ( \varphi (b)-\varphi (a))^{\delta_i }}{\Gamma(\xi+\mu+\delta_i-1)}+\frac{1}{\Gamma(\xi+\mu-1)}
\end{align} and $\Lambda$ is defined in equation \eqref{a1}.
\end{theorem}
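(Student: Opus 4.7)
The plan is to recast \eqref{eq1}--\eqref{eq3}, via Theorem \ref{thm3.2}, as the fixed-point problem $y=\mathcal{F}y$ on the Banach space $X:=C_{2-\xi;\varphi}(J,\R)$, where
\[
\mathcal{F}y(t)=(\varphi(t)-\varphi(a))^{\xi-1}\tilde{A}_y+\mathcal{I}_{a+}^{\mu\,;\,\varphi}g_y(t),
\]
with $g_y,\tilde{A}_y$ as in \eqref{3.13}--\eqref{3.14}. I would then split $\mathcal{F}=\mathcal{P}+\mathcal{Q}$, taking
\[
\mathcal{P}y(t):=\mathcal{I}_{a+}^{\mu\,;\,\varphi}g_y(t),\qquad \mathcal{Q}y(t):=(\varphi(t)-\varphi(a))^{\xi-1}\tilde{A}_y,
\]
so that $\mathcal{P}$ carries the contractive part and $\mathcal{Q}$ the compact part, and verify the three hypotheses of Theorem \ref{Krasnoselskii} on a suitably large closed ball $B_r\subset X$.

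First I would derive a pointwise bound for the implicitly defined $g_y$. Setting $x_2=y_2=0$ in $(H1)$ and solving for $|g_y|$ using $L<1$ gives $|g_y(t)|\le \frac{K|y(t)|+M_0(t)}{1-L}$, where $M_0(t):=|f(t,0,0)|$. Feeding this into the Volterra integral $\mathcal{I}_{a+}^{\mu\,;\,\varphi}g_y$, the boundary integrals $\mathcal{I}_{a+}^{\mu+\delta_i\,;\,\varphi}g_y(\tau_i)$ and $\mathcal{I}_{a+}^{\mu\,;\,\varphi}g_y(b)$ appearing in $\tilde{A}_y$, and bounding $|y(s)|\le (\varphi(s)-\varphi(a))^{\xi-2}\|y\|_{C_{2-\xi;\varphi}}$, Lemma \ref{lema2}(b) with $\delta=\xi-1$ yields weighted-norm bounds for $\mathcal{P}y$ and $\mathcal{Q}y$ whose $\|y\|$-coefficients recombine to the quantity $\sigma$, while the $M_0$-contribution produces a fixed constant. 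The condition $\sigma<1$ therefore lets me pick $r>0$ large enough that $\|\mathcal{P}y+\mathcal{Q}z\|_{C_{2-\xi;\varphi}}\le r$ whenever $y,z\in B_r$, establishing hypothesis (1) of Theorem \ref{Krasnoselskii}.

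For the contraction of $\mathcal{P}$, applying $(H1)$ twice gives $|g_{y_1}(t)-g_{y_2}(t)|\le \frac{K}{1-L}|y_1(t)-y_2(t)|$; then $\mathcal{I}_{a+}^{\mu\,;\,\varphi}$, together with the weight $(\varphi(t)-\varphi(a))^{2-\xi}$ and Lemma \ref{lema2}(b), delivers exactly
\[
\|\mathcal{P}y_1-\mathcal{P}y_2\|_{C_{2-\xi;\varphi}}\le \frac{K\Gamma(\xi-1)(\varphi(b)-\varphi(a))^{\mu}}{(1-L)\Gamma(\xi+\mu-1)}\|y_1-y_2\|_{C_{2-\xi;\varphi}},
\]
whose constant is strictly less than $\sigma<1$. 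For $\mathcal{Q}$, the image $\mathcal{Q}(B_r)$ lies in the one-dimensional subspace spanned by $(\varphi(\cdot)-\varphi(a))^{\xi-1}$, and boundedness of $\{\tilde{A}_y:y\in B_r\}$ (from Step 2) gives relative compactness; continuity of $\mathcal{Q}$ reduces to continuity of $y\mapsto\tilde{A}_y$, which follows from continuity of $f$ together with the Lipschitz estimate on $g_y$. Krasnoselskii's theorem then furnishes a fixed point $y^\ast\in B_r$ of $\mathcal{P}+\mathcal{Q}$, which by Theorem \ref{thm3.2} is a solution of \eqref{eq1}--\eqref{eq3}.

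The main obstacle is the bookkeeping needed in the self-mapping step: because $g_y$ is defined only implicitly through \eqref{3.13}, one must first use $L<1$ to solve for $|g_y|$ pointwise, then track the precise weight $(\varphi(t)-\varphi(a))^{2-\xi}$ through both the Volterra part and each of the boundary integrals $\mathcal{I}_{a+}^{\mu+\delta_i\,;\,\varphi}$, and finally regroup the $(\varphi(b)-\varphi(a))^{\delta_i}$- and $\Gamma(\xi+\mu+\delta_i-1)$-factors so that the result matches $\Omega$ and $\sigma$ exactly. The contraction estimate for $\mathcal{P}$ is a one-line application of Lemma \ref{lema2}(b), and the compactness of $\mathcal{Q}$ is essentially free from its one-dimensional range, so the real technical content sits in the weighted-norm computation that couples $\tilde{A}_y$ to the implicit equation.
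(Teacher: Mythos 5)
Your proposal is correct, but it takes a genuinely different route from the paper: you swap the roles of the two operators in the Krasnoselskii decomposition. The paper takes $Py(t)=(\varphi(t)-\varphi(a))^{\xi-1}\tilde{A}_y$ as the contraction and $Qy(t)=\mathcal{I}_{a+}^{\mu\,;\,\varphi}g_y(t)$ as the compact operator, proving compactness of the Volterra part by a uniform-boundedness plus equicontinuity argument and Arzel\`a--Ascoli; you instead make the Volterra part the contraction, with constant $\frac{K\Gamma(\xi-1)(\varphi(b)-\varphi(a))^{\mu}}{(1-L)\Gamma(\xi+\mu-1)}$, and exploit the fact that the $\tilde{A}_y$-part has range in the one-dimensional span of $(\varphi(\cdot)-\varphi(a))^{\xi-1}$, so its compactness is immediate once $\{\tilde{A}_y: y\in B_r\}$ is bounded and $y\mapsto\tilde{A}_y$ is continuous (both of which follow from your pointwise bounds $|g_y|\le\frac{K|y|+M_0}{1-L}$ and $|g_{y_1}-g_{y_2}|\le\frac{K}{1-L}|y_1-y_2|$, exactly as in the paper). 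Your version trades the Arzel\`a--Ascoli step for an essentially free finite-rank argument, which is a real simplification; the self-mapping estimate in Step 1 is identical in both versions since the sum of the two operator bounds is the same, so the hypothesis $\sigma<1$ enters in the same way. One caveat you share with the paper: your claim that your contraction constant is $\le\sigma$ (and the paper's claim that its constant is $\le\sigma$) silently uses nonnegativity of the term $\frac{(\varphi(b)-\varphi(a))^{\xi-1}}{\Gamma(\xi)\Lambda}\Omega$, i.e.\ effectively $\Lambda>0$ and $\lambda_i\ge 0$; the paper drops absolute values around $\Lambda$ and $\lambda_i$ throughout its estimates, so this is an inherited imprecision rather than a new gap, but it would be cleaner to state the contraction condition for your $\mathcal{P}$ directly rather than deducing it from $\sigma<1$.
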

\begin{proof}
In the view of  Theorem \ref{thm3.2}, the equivalent fractional integral  equation to  the nonlocal   BVP for $\varphi$-Hilfer implicit FDEs \eqref{eq1}-\eqref{eq3} can be written as operator equation as follows 
\begin{equation}\label{4.1}
y=Py+Qy,\, y\in C_{2-\xi;\,\varphi} (J, \R),
\end{equation}
where $P$ and $Q$ are the operators defined  on $C_{2-\xi;\,\varphi} (J, \R)$  by
\begin{align*}
Py(t)&=(\varphi(t)-\varphi(a))^{\xi-1}\tilde{A}_y, \, t\in J\quad\
\text{and}\\
Qy(t)&=\mathcal{I}_{a +}^{\mu\,;\, \varphi}g_y(t)=\frac{1}{\Gamma(\mu)}\int_{a}^{t}\varphi'(s)(\varphi(t)-\varphi(s))^{\mu-1} g_y(s)ds, \, t\in J,
\end{align*}
where $g_y\in C_{2-\xi;\,\varphi} (J, \R)$ satisfies the functional equation 
$$
g_y(t)=f\left( t, (\varphi(t)-\varphi(a))^{\xi-1}\tilde{A}_y+\mathcal{I}_{a +}^{\mu\,;\, \varphi}g_y(t), g_y(t) \right), \quad t\in J,
$$ 
and 
$$
\tilde{A}_y=\frac{1}{\Lambda\Gamma(\xi)}\left[\sum_{i=1}^{m}\lambda_i\,\mathcal{I}_{a+}^{\mu+\delta_i\,;\,\varphi}g_y(\tau_i)-\mathcal{I}_{a +}^{\mu\,;\, \varphi}g_y(b) \right].
$$
To prove the nonlocal implicit BVP \eqref{eq1}-\eqref{eq3} has a solution in $C_{2-\xi;\,\varphi} (J, \R)$  is equivalent to show that the operator equation \eqref{4.1} has a fixed point. Define, 
\begin{align*}
\zeta&=\frac{M\Gamma(\xi-1)(\varphi(t)-\varphi(a))^{\mu}}{1-L}\left\lbrace\frac{(\varphi(t)-\varphi(a))^{\xi-1}}{\Gamma(\xi)}\Omega+\frac{1}{\Gamma(\xi+\mu-1)} \right\rbrace,
\end{align*}
where
$$
M=\underset{t\in J 
}{\max }\left\vert \left( \varphi \left( t\right) -\varphi \left( a\right) \right)
^{2-\xi }f\left( t,\,0,\,0\right) \right\vert.
$$
Choose $r$ such that
$$
r\geq \frac{\zeta}{1-\sigma}$$
and consider the closed ball
$$
B_r=\left\lbrace y\in C_{2-\xi;\,\varphi} (J, \R):\left\Vert y\right\Vert _{C_{2-\xi ;\,\varphi }\left(J,\,\R\right)  } \leq r \right\rbrace. 
$$
To obtain the fixed point of the operator equation \eqref{4.1}, we prove  that the operators $P$ and $Q$ satisfies the conditions of Theorem \ref{Krasnoselskii} (Krasnoselskii fixed point theorem). We give the proof in the following steps:\\
\textbf{Step 1:} $Px+Qy\in B_r$ for all $x,y\in  B_r.$\\
Let any $x,y\in  B_r$. Then,
$
\left( \varphi \left( t\right) -\varphi \left( a\right) \right)^{2-\xi }Px(t)=(\varphi(t)-\varphi(a))\tilde{A}_x\in C(J, \R).
$
This implies, $Px \in C_{2-\xi;\,\varphi} (J, \R)$. 

Since $g_y\left( \cdot\right)=f(\cdot, y(\cdot), g_y(\cdot)) \in C_{2-\xi;\,\varphi} (J, \R) $ for any $y \in C_{2-\xi;\,\varphi} (J, \R), 0\leq 2-\xi<1$ and $Qy(t)=\mathcal{I}_{a +}^{\mu\,;\, \varphi}g_y(t)$, by Lemma \ref{lema1} we have $Qy \in C_{2-\xi;\,\varphi} (J, \R)$. Using  triangle inequality in the space $C_{2-\xi;\,\varphi} (J, \R)  $ we have
\begin{equation}\label{4.2}
\left\Vert Px+Qy\right\Vert _{C_{2-\xi ;\,\varphi }\left(J,\,\R\right)  }\leq \left\Vert Px\right\Vert _{C_{2-\xi ;\,\varphi }\left(J,\,\R\right)  }+\left\Vert Qy\right\Vert _{C_{2-\xi ;\,\varphi }\left(J,\,\R\right)  }.
\end{equation}
Now, by  using hypothesis ($H1$), we have
\begin{small}
\begin{align}\label{4.3}
&\left|  \left( \varphi \left( t\right) -\varphi \left( a\right) \right)^{2-\xi }Px(t)\right|\nonumber\\
&=\left| (\varphi(t)-\varphi(a))\tilde{A}_x \right| \nonumber\\
&=\left| \frac{(\varphi(t)-\varphi(a))}{\Lambda\Gamma(\xi)} \left[\sum_{i=1}^{m}\lambda_i\,\mathcal{I}_{a+}^{\mu+\delta_i\,;\,\varphi}g_x(\tau_i)-\mathcal{I}_{a +}^{\mu\,;\, \varphi}g_x(b) \right]  \right| \nonumber\\
&\leq \frac{(\varphi(t)-\varphi(a))}{\Lambda\Gamma(\xi)} \left[\sum_{i=1}^{m}\frac{\lambda_i}{\Gamma(\mu+\delta_i)}\int_{a}^{\tau_i}\varphi'(s)(\varphi(\tau_i)-\varphi(s))^{\mu+\delta_i-1} \left| f(s, x(s), g_x(s))\right| ds\right.\nonumber\\
&\left.\quad+\frac{1}{\Gamma(\mu)}\int_{a}^{b}\varphi'(s)(\varphi(b)-\varphi(s))^{\mu-1} \left| f(s, x(s), g_x(s))\right|ds\right]\nonumber\\
&\leq \frac{(\varphi(t)-\varphi(a))}{\Lambda\Gamma(\xi)} \left[\sum_{i=1}^{m}\frac{\lambda_i}{\Gamma(\mu+\delta_i)}\int_{a}^{\tau_i}\varphi'(s)(\varphi(\tau_i)-\varphi(s))^{\mu+\delta_i-1} \times \right. \nonumber\\ 
&\left. \qquad \qquad\left\lbrace  \left| f(s, x(s), g_x(s))-f(s,0,0)\right|+\left|f(s,0,0)\right|\right\rbrace  ds\right.\nonumber\\
&\left.\quad+\frac{1}{\Gamma(\mu)}\int_{a}^{b}\varphi'(s)(\varphi(b)-\varphi(s))^{\mu-1}\left\lbrace  \left| f(s, x(s), g_x(s))-f(s,0,0)\right|+\left|f(s,0,0)\right|\right\rbrace ds\right]\nonumber\\
&\leq \frac{(\varphi(t)-\varphi(a))}{\Lambda\Gamma(\xi)} \left[\sum_{i=1}^{m}\frac{\lambda_i}{\Gamma(\mu+\delta_i)}\int_{a}^{\tau_i}\varphi'(s)(\varphi(\tau_i)-\varphi(s))^{\mu+\delta_i-1} \left\lbrace   K\left|  x(s)\right| +L\left|  g_x(s)\right| +\left|f(s,0,0)\right|\right\rbrace  ds\right.\nonumber\\
&\left.\quad+\frac{1}{\Gamma(\mu)}\int_{a}^{b}\varphi'(s)(\varphi(b)-\varphi(s))^{\mu-1}\left\lbrace  K\left|  x(s)\right| +L\left|  g_x(s)\right|+\left|f(s,0,0)\right|\right\rbrace ds\right]
\end{align}
But, by  hypothesis ($H1$),  we have
\begin{align*}
\left|  g_x(t)\right|&\leq\left| f(t, x(t), g_x(t))-f(t,0,0)\right|+\left|f(t,0,0)\right|\\
&\leq  K\left|  x(t)\right| +L\left|  g_x(t)\right|+\left|f(t,0,0)\right|, \, t\in J.
\end{align*}
\end{small}
This gives
\begin{equation}\label{4.4}
\left|  g_x(t)\right|\leq  \frac{K}{1-L}\left|  x(t)\right| +\frac{1}{1-L}\left|f(t,0,0)\right|, \, t\in J.
\end{equation}
Using equation \eqref{4.4} in equation \eqref{4.3}, we get
\begin{small}
\begin{align*}
&\left|  \left( \varphi \left( t\right) -\varphi \left( a\right) \right)^{2-\xi }Px(t)\right|\nonumber\\
&\leq \frac{(\varphi(t)-\varphi(a))}{\Lambda\Gamma(\xi)} \left[\sum_{i=1}^{m}\frac{\lambda_i}{\Gamma(\mu+\delta_i)}\int_{a}^{\tau_i}\varphi'(s)(\varphi(\tau_i)-\varphi(s))^{\mu+\delta_i-1}\right. \\
&\qquad\left.\times\left\lbrace   K\left|  x(s)\right| +L\left\lbrace \frac{K}{1-L}\left|  x(s)\right| +\frac{1}{1-L}\left|f(s,0,0)\right| \right\rbrace +\left|f(s,0,0)\right|\right\rbrace  ds\right.\nonumber\\
&\left.\quad+\frac{1}{\Gamma(\mu)}\int_{a}^{b}\varphi'(s)(\varphi(b)-\varphi(s))^{\mu-1}\left\lbrace  K\left|  x(s)\right| +L\left\lbrace \frac{K}{1-L}\left|  x(s)\right| +\frac{1}{1-L}\left|f(s,0,0)\right| \right\rbrace +\left|f(s,0,0)\right|\right\rbrace ds\right]\\
&\leq \frac{(\varphi(t)-\varphi(a))}{\Lambda\Gamma(\xi)} \left[\sum_{i=1}^{m}\frac{\lambda_i}{\Gamma(\mu+\delta_i)}\int_{a}^{\tau_i}\varphi'(s)(\varphi(\tau_i)-\varphi(s))^{\mu+\delta_i-1}\left\lbrace    \frac{K}{1-L}\left|  x(s)\right| +\frac{1}{1-L}\left|f(s,0,0)\right| \right\rbrace  ds\right.\nonumber\\
&\left.\quad+\frac{1}{\Gamma(\mu)}\int_{a}^{b}\varphi'(s)(\varphi(b)-\varphi(s))^{\mu-1}\left\lbrace    \frac{K}{1-L}\left|  x(s)\right| +\frac{1}{1-L}\left|f(s,0,0)\right| \right\rbrace ds\right]\\
&\leq \frac{(\varphi(t)-\varphi(a))}{\Lambda\Gamma(\xi)} \left[\frac{K}{1-L}\left\lbrace \sum_{i=1}^{m}\frac{\lambda_i}{\Gamma(\mu+\delta_i)}\int_{a}^{\tau_i}\varphi'(s)(\varphi(\tau_i)-\varphi(s))^{\mu+\delta_i-1} (\varphi(s)-\varphi(a))^{\xi-2}\right.\right. \\
&\qquad \left.\left.\times\left| (\varphi(s)-\varphi(a))^{2-\xi} x(s)\right|\right.\right.ds\\
&\left.\left. \qquad+\frac{1}{\Gamma(\mu)}\int_{a}^{b}\varphi'(s)(\varphi(b)-\varphi(s))^{\mu-1}(\varphi(s)-\varphi(a))^{\xi-2} \left| (\varphi(s)-\varphi(a))^{2-\xi} x(s)\right|  ds \right\rbrace \right.\nonumber\\
&\left.\qquad+\frac{1}{1-L}\left\lbrace \sum_{i=1}^{m}\frac{\lambda_i}{\Gamma(\mu+\delta_i)}\int_{a}^{\tau_i}\varphi'(s)(\varphi(\tau_i)-\varphi(s))^{\mu+\delta_i-1} (\varphi(s)-\varphi(a))^{\xi-2}\left| (\varphi(s)-\varphi(a))^{2-\xi} f(s,0,0)\right|\right.\right.ds\\
&\left.\left. \qquad+\frac{1}{\Gamma(\mu)}\int_{a}^{b}\varphi'(s)(\varphi(b)-\varphi(s))^{\mu-1}(\varphi(s)-\varphi(a))^{\xi-2} \left| (\varphi(s)-\varphi(a))^{2-\xi}f(s,0,0)\right|  ds \right\rbrace \right]\\
&\leq \frac{(\varphi(t)-\varphi(a))}{\Lambda\Gamma(\xi)} \left[\frac{K}{1-L}\left\Vert x\right\Vert _{C_{2-\xi ;\,\varphi }\left(J,\,\R\right)  }\left\lbrace \sum_{i=1}^{m}\frac{\lambda_i}{\Gamma(\mu+\delta_i)}\int_{a}^{b}\varphi'(s)(\varphi(b)-\varphi(s))^{\mu+\delta_i-1} (\varphi(s)-\varphi(a))^{\xi-2}ds\right.\right.\\
&\left.\left. \qquad+\frac{1}{\Gamma(\mu)}\int_{a}^{b}\varphi'(s)(\varphi(b)-\varphi(s))^{\mu-1}(\varphi(s)-\varphi(a))^{\xi-2} ds \right\rbrace  \right.\nonumber\\
&\left.\qquad+\frac{M}{1-L}\left\lbrace \sum_{i=1}^{m}\frac{\lambda_i}{\Gamma(\mu+\delta_i)}\int_{a}^{b}\varphi'(s)(\varphi(b)-\varphi(s))^{\mu+\delta_i-1} (\varphi(s)-\varphi(a))^{\xi-2}\right.\right.ds\\
&\left.\left. \qquad+\frac{1}{\Gamma(\mu)}\int_{a}^{b}\varphi'(s)(\varphi(b)-\varphi(s))^{\mu-1}(\varphi(s)-\varphi(a))^{\xi-2} ds \right\rbrace  \right]\\
&\leq \frac{(\varphi(t)-\varphi(a))}{\Lambda\Gamma(\xi)} \left[\frac{K}{1-L}\left\Vert x\right\Vert _{C_{2-\xi ;\,\varphi }\left(J,\,\R\right)  }\left\lbrace \sum_{i=1}^{m}\frac{\lambda_i\Gamma(\xi-1)}{\Gamma(\mu+\delta_i+\xi-1)} (\varphi(b)-\varphi(a))^{\mu+\delta_i+\xi-2}\right.\right.\\
&\left.\left. \qquad+\frac{\Gamma(\xi-1)}{\Gamma(\mu+\xi-1)}(\varphi(b)-\varphi(a))^{\mu+\xi-2}  \right\rbrace  \right.\nonumber\\
&\left.\qquad+\frac{M}{1-L}\left\lbrace \sum_{i=1}^{m}\frac{\lambda_i\Gamma(\xi-1)}{\Gamma(\mu+\delta_i+\xi-1)} (\varphi(b)-\varphi(a))^{\mu+\delta_i+\xi-2}+\frac{\Gamma(\xi-1)}{\Gamma(\mu+\xi-1)}(\varphi(b)-\varphi(a))^{\mu+\xi-2}\right\rbrace    \right].
\end{align*}
\end{small}
Therefore,
\begin{small}
\begin{align}\label{4.5}
&\left\Vert Px\right\Vert _{C_{2-\xi ;\,\varphi }\left(J,\,\R\right)  }\nonumber\\
&=\underset{t\in J 
}{\max }\left\vert \left( \varphi \left( t\right) -\varphi \left( a\right) \right)
^{2-\xi }Px\left( t\right) \right\vert\nonumber\\
&\leq \frac{(\varphi(b)-\varphi(a))^{\mu+\xi-1}\Gamma(\xi-1)}{\Lambda\Gamma(\xi)} \left[\frac{K\,r}{1-L} \left\lbrace \sum_{i=1}^{m}\frac{\lambda_i}{\Gamma(\mu+\delta_i+\xi-1)} (\varphi(b)-\varphi(a))^{\delta_i}+\frac{1}{\Gamma(\mu+\xi-1)}\right\rbrace  \right.\nonumber\\
&\left.\qquad+\frac{M}{1-L}\left\lbrace \sum_{i=1}^{m}\frac{\lambda_i}{\Gamma(\mu+\delta_i+\xi-1)} (\varphi(b)-\varphi(a))^{\delta_i}+\frac{1}{\Gamma(\mu+\xi-1)}\right\rbrace\right]\nonumber\\
&\leq \frac{(\varphi(b)-\varphi(a))^{\mu+\xi-1}\Gamma(\xi-1)}{\Lambda\Gamma(\xi)} \frac{(K\,r+M)}{1-L}\Omega=A.
\end{align}
Further, by using  ($H1$) and  inequality \eqref{4.4}, we have
\begin{align*}
&\left|  \left( \varphi \left( t\right) -\varphi \left( a\right) \right)^{2-\xi }Qy(t)\right|\nonumber\\
&=\left| \frac{ \left( \varphi \left( t\right) -\varphi \left( a\right) \right)^{2-\xi }}{\Gamma(\mu)}\int_{a}^{t}\varphi'(s)(\varphi(t)-\varphi(s))^{\mu-1} g_y(s)ds \right| \nonumber\\
&\leq \frac{ \left( \varphi \left( t\right) -\varphi \left( a\right) \right)^{2-\xi }}{\Gamma(\mu)}\int_{a}^{t}\varphi'(s)(\varphi(t)-\varphi(s))^{\mu-1}\left| f(s, y(s), g_y(s))\right|ds\nonumber\\
&\leq \frac{ \left( \varphi \left( t\right) -\varphi \left( a\right) \right)^{2-\xi }}{\Gamma(\mu)}\int_{a}^{t}\varphi'(s)(\varphi(t)-\varphi(s))^{\mu-1}\left\lbrace  \left| f(s, y(s), g_y(s))-f(s,0,0)\right|+\left|f(s,0,0)\right|\right\rbrace ds\nonumber\\
&\leq \frac{ \left( \varphi \left( t\right) -\varphi \left( a\right) \right)^{2-\xi }}{\Gamma(\mu)}\int_{a}^{t}\varphi'(s)(\varphi(t)-\varphi(s))^{\mu-1}\left\lbrace  K\left|  y(s)\right| +L\left|  g_y(s)\right|+\left|f(s,0,0)\right|\right\rbrace ds\nonumber\\
&\leq\frac{ \left( \varphi \left( t\right) -\varphi \left( a\right) \right)^{2-\xi }}{\Gamma(\mu)}\int_{a}^{t}\varphi'(s)(\varphi(t)-\varphi(s))^{\mu-1}\nonumber\\
&\qquad\times\left\lbrace  K\left|  y(s)\right| +L\left\lbrace \frac{K}{1-L}\left|  y(s)\right| +\frac{1}{1-L}\left|f(s,0,0)\right| \right\rbrace +\left|f(s,0,0)\right|\right\rbrace ds\nonumber\\
&\leq \frac{ \left( \varphi \left( t\right) -\varphi \left( a\right) \right)^{2-\xi }}{\Gamma(\mu)}\int_{a}^{t}\varphi'(s)(\varphi(t)-\varphi(s))^{\mu-1}\left\lbrace    \frac{K}{1-L}\left| y(s)\right| +\frac{1}{1-L}\left|f(s,0,0)\right| \right\rbrace ds\nonumber\\
&\leq  \left( \varphi \left( t\right) -\varphi \left( a\right) \right)^{2-\xi }\left\lbrace \frac{K}{(1-L)\Gamma(\mu)}\int_{a}^{t}\varphi'(s)(\varphi(t)-\varphi(s))^{\mu-1}(\varphi(s)-\varphi(a))^{\xi-2} \left| (\varphi(s)-\varphi(a))^{2-\xi} y(s)\right|  ds  \right.\nonumber\\
&\left.\qquad+\frac{1}{(1-L)\Gamma(\mu)}\int_{a}^{t}\varphi'(s)(\varphi(t)-\varphi(s))^{\mu-1} (\varphi(s)-\varphi(a))^{\xi-2}\left| (\varphi(s)-\varphi(a))^{2-\xi} f(s,0,0)\right|ds\right\rbrace \nonumber\\
&\leq  \left( \varphi \left( t\right) -\varphi \left( a\right) \right)^{2-\xi }\left\lbrace \frac{K}{1-L}\left\Vert y\right\Vert _{C_{2-\xi ;\,\varphi }\left(J,\,\R\right)  }\frac{1}{\Gamma(\mu)}\int_{a}^{t}\varphi'(s)(\varphi(t)-\varphi(s))^{\mu-1}(\varphi(s)-\varphi(a))^{\xi-2} ds\right.\nonumber\\
&\left.\qquad+\frac{M}{1-L}\frac{1}{\Gamma(\mu)}\int_{a}^{t}\varphi'(s)(\varphi(t)-\varphi(s))^{\mu-1}(\varphi(s)-\varphi(a))^{\xi-2} ds \right\rbrace \nonumber\\
&\leq  \left( \varphi \left( t\right) -\varphi \left( a\right) \right)^{2-\xi }\left\lbrace \frac{K}{1-L}\left\Vert y\right\Vert _{C_{2-\xi ;\,\varphi }\left(J,\,\R\right)  }\frac{\Gamma(\xi-1)}{\Gamma(\mu+\xi-1)}(\varphi(t)-\varphi(a))^{\mu+\xi-2}\right.\nonumber\\
&\left.\qquad+\frac{M}{1-L}\frac{\Gamma(\xi-1)}{\Gamma(\mu+\xi-1)}(\varphi(t)-\varphi(a))^{\mu+\xi-2}\right\rbrace\nonumber\\
&\leq  \frac{(\varphi(t)-\varphi(a))^{\mu}\Gamma(\xi-1)}{(1-L)\Gamma(\mu+\xi-1)} \left\lbrace K\left\Vert y\right\Vert _{C_{2-\xi ;\,\varphi }\left(J,\,\R\right) }+ M\right\rbrace.
\end{align*}
\end{small}
Therefore,
\begin{align}\label{4.6}
\left\Vert Qy\right\Vert _{C_{2-\xi ;\,\varphi }\left(J,\,\R\right)  }
&=\underset{t\in J 
}{\max }\left\vert \left( \varphi \left( t\right) -\varphi \left( a\right) \right)
^{2-\xi }Qy\left( t\right) \right\vert\nonumber\\
&\leq \frac{(\varphi(b)-\varphi(a))^{\mu}\Gamma(\xi-1)}{(1-L)\Gamma(\mu+\xi-1)} \left\lbrace Kr+ M\right\rbrace=B.
\end{align}
From inequalities \eqref{4.5} and \eqref{4.6} and the definition of $r$ given in \eqref{4.2}, we obtain
$$
\left\Vert Px+Qy\right\Vert _{C_{2-\xi ;\,\varphi }\left(J,\,\R\right)  }\leq A+B=r\sigma+\zeta\leq r.
$$
This proves, $Px+Qy\in B_r$ for all $x,y\in  B_r.$\\
\textbf{Step 2:} Operator $P$ is contraction on $B_r$.\\
Let any $y_1, y_2 \in  B_r$ and $t\in  J$. Then 
\begin{align}\label{4.7}
&\left|  \left( \varphi \left( t\right) -\varphi \left( a\right) \right)^{2-\xi }\left( Py_1(t)-Py_2(t)\right) \right|\nonumber\\
&\leq \frac{(\varphi(t)-\varphi(a))}{\Lambda\Gamma(\xi)} \left[\sum_{i=1}^{m}\frac{\lambda_i}{\Gamma(\mu+\delta_i)}\int_{a}^{\tau_i}\varphi'(s)(\varphi(\tau_i)-\varphi(s))^{\mu+\delta_i-1} \left| g_{y_1}(s)-g_{y_2}(s)\right| ds\right.\nonumber\\
&\left.\quad+\frac{1}{\Gamma(\mu)}\int_{a}^{b}\varphi'(s)(\varphi(b)-\varphi(s))^{\mu-1} \left| g_{y_{1}}(s)-g_{y_{2}}(s)\right|ds\right].
\end{align}
But by hypothesis ($H1$), we have
\begin{align*}
 \left| g_{y_{1}}(t)-g_{y_{2}}(t)\right|&= \left| f(t,y_{1}(t), g_{y_{1}}(t))-f(t,y_{2}(t), g_{y_{2}}(t))\right|\\
 &\leq K \left| y_{1}(t)-y_{2}(t)\right|+ L\left| g_{y_{1}}(t)-g_{y_{2}}(t)\right|, \, t\in J.
\end{align*}
Thus,
\begin{equation}\label{4.8}
\left| g_{y_{1}}(t)-g_{y_{2}}(t)\right|\leq \frac{K}{1-L} \left| y_{1}(t)-y_{2}(t)\right|, \, t\in J.
\end{equation}
Using equation \eqref{4.8} in equation \eqref{4.7} we have
\begin{align*}
&\left|  \left( \varphi \left( t\right) -\varphi \left( a\right) \right)^{2-\xi }\left( Py_1(t)-Py_2(t)\right) \right|\nonumber\\
& \leq  \frac{K}{1-L} \frac{(\varphi(t)-\varphi(a))}{\Lambda\Gamma(\xi)} \left[\sum_{i=1}^{m}\frac{\lambda_i}{\Gamma(\mu+\delta_i)}\int_{a}^{\tau_i}\varphi'(s)(\varphi(\tau_i)-\varphi(s))^{\mu+\delta_i-1} \right.\nonumber \\
& \left. \qquad\times\left( \varphi \left( s\right) -\varphi \left( a\right) \right)^{\xi-2 }\left|\left( \varphi \left( s\right) -\varphi \left( a\right) \right)^{2-\xi } (y_{1}(s)-y_{2}(s))\right| ds\right.\nonumber\\
&\left.\quad+\frac{1}{\Gamma(\mu)}\int_{a}^{b}\varphi'(s)(\varphi(b)-\varphi(s))^{\mu-1} \left( \varphi \left( s\right) -\varphi \left( a\right) \right)^{\xi-2 }\left|\left( \varphi \left( s\right) -\varphi \left( a\right) \right)^{2-\xi } (y_{1}(s)-y_{2}(s))\right|ds\right]\nonumber\\
& \leq  \frac{K}{1-L} \frac{(\varphi(t)-\varphi(a))}{\Lambda\Gamma(\xi)}\left\Vert y_1-y_2\right\Vert _{C_{2-\xi ;\,\varphi }\left(J,\,\R\right)  }\times\\
&\qquad \left[\sum_{i=1}^{m}\frac{\lambda_i\Gamma(\xi-1)}{\Gamma(\mu+\delta_i+\xi-1)} (\varphi(b)-\varphi(a))^{\mu+\delta_i+\xi-2}+\frac{\Gamma(\xi-1)}{\Gamma(\mu+\xi-1)}(\varphi(b)-\varphi(a))^{\mu+\xi-2}\right].
\end{align*}
Therefore,
\begin{align*}
\left\Vert Py_1-Py_2\right\Vert _{C_{2-\xi ;\,\varphi }\left(J,\,\R\right)  }
&=\underset{t\in J 
}{\max }\left\vert \left( \varphi \left( t\right) -\varphi \left( a\right) \right)^{2-\xi }\left( Py_1(t)-Py_2(t)\right) \right\vert\nonumber\\
&\leq \left\Vert y_1-y_2\right\Vert _{C_{2-\xi ;\,\varphi }\left(J,\,\R\right)  } \frac{K\Gamma(\xi-1)}{1-L} \frac{(\varphi(t)-\varphi(a))^{\mu+\xi-1}}{\Lambda\Gamma(\xi)}\Omega\nonumber\\
&\leq \sigma \left\Vert y_1-y_2\right\Vert _{C_{2-\xi ;\,\varphi }\left(J
,\,\R\right)  }.
\end{align*}
Since $\sigma <1$, $P$ is contraction on $B_r.$

\textbf{Step 3:} The operator $Q$ is compact and continuous.\\
We have already proved that, $Q$ is self mapping on $C_{2-\xi ;\,\varphi }\left(J,\,\R\right).$ Further, from Step 1 we have 
$$
\left\Vert Qy\right\Vert _{C_{2-\xi ;\,\varphi }\left(J,\,\R\right)  }
\leq \frac{(\varphi(b)-\varphi(a))^{\mu}\Gamma(\xi-1)}{(1-L)\Gamma(\mu+\xi-1)} \left\lbrace Kr+ M\right\rbrace,\, y \in B_r. 
$$
Therefore, $Q(B_r)=\left\lbrace Qy: y\in B_r \right\rbrace $ is uniformly bounded. Next, we prove that $Q(B_r)$ is equicontinuous. Let any $y\in B_r$ and $t_1,\,t_2\in J$ with $t_1>t_2$. Then
\begin{align*}
&\left| Qy(t_1)-Qy(t_2)\right|\\
&=\left| \frac{ 1}{\Gamma(\mu)}\int_{a}^{t_1}\varphi'(s)(\varphi(t_1)-\varphi(s))^{\mu-1} g_y(s)ds -\frac{ 1}{\Gamma(\mu)}\int_{a}^{t_2}\varphi'(s)(\varphi(t_2)-\varphi(s))^{\mu-1} g_y(s)ds \right| \\
&\leq\left| \frac{ 1}{\Gamma(\mu)}\int_{a}^{t_1}\varphi'(s)(\varphi(t_1)-\varphi(s))^{\mu-1}(\varphi(s)-\varphi(a))^{\xi-2} \left| (\varphi(s)-\varphi(a))^{2-\xi} f(s, y(s), g_y(s))\right|ds\right.\\
&\left.~-\frac{ 1}{\Gamma(\mu)}\int_{a}^{t_2}\varphi'(s)(\varphi(t_2)-\varphi(s))^{\mu-1}(\varphi(s)-\varphi(a))^{\xi-2} \left| (\varphi(s)-\varphi(a))^{2-\xi} f(s, y(s), g_y(s))\right|ds \right|.  
\end{align*}
Since $g_y\left( \cdot\right)=f(\cdot, y(\cdot), g_y(\cdot)) \in C_{2-\xi;\,\varphi} (J, \R) $ for any $y \in C_{2-\xi;\,\varphi} (J, \R)$, $(\varphi(\cdot)-\varphi(a))^{2-\xi} f(\cdot, y(\cdot), g_y(\cdot))$ is continuous on $J=[a,\,b]$. Therefore, there exists $\mathfrak{K}\in \R$ such that 
$$
\left|(\varphi(t)-\varphi(a))^{2-\xi} f(t, y(t), g_y(t)) \right| \leq  \mathfrak{K},~ \text{for all}~ t\in J.
$$
Thus,
\begin{align*}
&\left| Qy(t_1)-Qy(t_2)\right|\\
&\leq\left| \frac{  \mathfrak{K}}{\Gamma(\mu)}\int_{a}^{t_1}\varphi'(s)(\varphi(t_1)-\varphi(s))^{\mu-1}(\varphi(s)-\varphi(a))^{\xi-2} ds\right.\\
&\left.~-\frac{  \mathfrak{K}}{\Gamma(\mu)}\int_{a}^{t_2}\varphi'(s)(\varphi(t_2)-\varphi(s))^{\mu-1}(\varphi(s)-\varphi(a))^{\xi-2} ds \right|\\
&\leq\left|\mathfrak{K} \frac{\Gamma(\xi-1)}{\Gamma(\mu+\xi-1)}(\varphi(t_1)-\varphi(a))^{\mu+\xi-2}-\mathfrak{K}\frac{\Gamma(\xi-1)}{\Gamma(\mu+\xi-1)}(\varphi(t_2)-\varphi(a))^{\mu+\xi-2} \right|.
\end{align*}
Hence, 
$$
\left| Qy(t_1)-Qy(t_2)\right|\leq \mathfrak{K} \frac{\Gamma(\xi-1)}{\Gamma(\mu+\xi-1)}\left|(\varphi(t_1)-\varphi(a))^{\mu+\xi-2}-(\varphi(t_2)-\varphi(a))^{\mu+\xi-2} \right|.
$$
Using continuity of  $\varphi$, we observe that $\left| Qy(t_1)-Qy(t_2)\right|\to 0$ as $\left|t_1-t_2 \right|\to 0 $. This proves  $Q(B_r)$ is equicontinuous. Thus, by Arzel$\grave{a}$-Ascoli Theorem, $Q(B_r)$ is relatively compact. We proved that, operator $Q$ is compact.

Since $P$ and $Q$ satisfy all the conditions of Krasnoselskii fixed point theorem (Theorem \ref{Krasnoselskii}), the operator equation defined in the equation \eqref{4.1}, has at least  one fixed point in the space $C_{2-\xi ;\,\varphi }\left(J,\,\R\right)$ which is the solution of the nonlocal BVP for implicit FDEs \eqref{eq1}-\eqref{eq3}.

\end{proof}

\section{Ulam Stability Results}
We begin with definitions regarding  Ulam stabilities.  To discuss the these stabilities for the problem  \eqref{eq1}, we follow the approach of \cite{jose3}.

\begin{definition}\label{d1}
The equation \eqref{eq1} is Ulam-Hyers stable if there exists a real number $C_f > 0 $, such that for each $\epsilon > 0 $ and for each solution $z \in C_{2-\xi ;\, \varphi }(J, \R)$   of  the inequality
\begin{equation}\label{ed1}
\left| ^H \mathcal{D}^{\mu,\,\nu\,;\, \varphi}_{a +}z(t)
- f(t, z(t),^H \mathcal{D}^{\mu,\,\nu\,;\, \varphi}_{a +}z(t))\right| \leq \epsilon, \,\, t\in J,
\end{equation} 
there exists solution $y \in C_{2-\xi ;\, \varphi }(J, \R)$   of  the equation \eqref{eq1} with
$$
\left\Vert z-y\right\Vert _{C_{2-\xi ;\,\varphi}\left(J,\,\R\right)  }\leq C_f \,\epsilon,\quad t\in J.
$$
\end{definition}

\begin{definition}\label{d2}
The equation \eqref{eq1}  is generalized Ulam-Hyers stable if there exists a function $\phi_f \in C(\R_+,\R_+)$, $\phi_f (0) = 0$, such that for  each solution $z \in C_{2-\xi ;\, \varphi }(J, \R)$   of  the inequality \eqref{ed1}
there exists solution $y \in C_{2-\xi ;\, \varphi }(J, \R)$   of  the equation \eqref{eq1} with
$$
\left\Vert z-y\right\Vert _{C_{2-\xi ;\,\varphi}\left(J,\,\R\right)  }\leq \phi_f (\epsilon),\quad t\in J.
$$
\end{definition}

\begin{definition}\label{d3}
The equation \eqref{eq1} is Ulam-Hyers-Rassias stable, with respect to the  $\chi\in C(J,\,\R_+)$,  if there exists a  $C_{f,\chi} > 0 $ such that for each $\epsilon > 0 $ and for each solution $z \in C_{2-\xi ;\, \varphi }(J, \R)$   of  the inequality
\begin{equation}\label{ed3}
\left| ^H \mathcal{D}^{\mu,\,\nu\,;\, \varphi}_{a +}z(t)
- f(t, z(t),^H \mathcal{D}^{\mu,\,\nu\,;\, \varphi}_{a +}z(t))\right| \leq \epsilon\,\chi(t), \quad t\in J,
\end{equation} 
there exists solution $y \in C_{2-\xi ;\, \varphi }(J, \R)$   of  the equation \eqref{eq1} with
$$
\left|  \left(\varphi(t) -\varphi(a) \right) ^{2-\xi}\left( z(t)-y(t)\right) \right| \leq  \,\epsilon\,C_{f,\chi} \,\chi(t),\quad t\in J.
$$
\end{definition}

\begin{definition}\label{d4}
The equation \eqref{eq1} is generalized Ulam-Hyers-Rassias stable, with respect to the  $\chi\in C(J,\,\R_+)$,  if there exists a  $C_{f,\chi} > 0 $ such that for each solution $z \in C_{2-\xi ;\, \varphi }(J, \R)$   of  the inequality
\begin{equation}\label{ed4}
\left| ^H \mathcal{D}^{\mu,\,\nu\,;\, \varphi}_{a +}z(t)
- f(t, z(t),^H \mathcal{D}^{\mu,\,\nu\,;\, \varphi}_{a +}z(t))\right| \leq \,\chi(t), \quad t\in J,
\end{equation} 
there exists solution $y \in C_{2-\xi ;\, \varphi }(J, \R)$   of  the equation \eqref{eq1} with
$$
\left|  \left(\varphi(t) -\varphi(a) \right) ^{2-\xi}\left( z(t)-y(t)\right) \right| \leq  \,C_{f,\chi} \,\chi(t),\quad t\in J.
$$
\end{definition}

\begin{rem}
A function $ z \in C_{2-\xi ;\, \varphi }(J, \R)$  is solution of  the  inequality \eqref{ed1} if and only
if there exists a function $w \in C_{2-\xi ;\, \varphi }(J, \R)$  (which depend on $z$) such that
\begin{enumerate}[topsep=0pt,itemsep=-1ex,partopsep=1ex,parsep=1ex]
\item $ \left|w(t) \right| \leq \epsilon$.
\item  $^H \mathcal{D}^{\mu,\,\nu\,;\, \varphi}_{a +}z(t)
=f(t, z(t),^H \mathcal{D}^{\mu,\,\nu\,;\, \varphi}_{a +}z(t))+w(t)$.
\end{enumerate}
\end{rem}

\begin{theorem}[Ulam-Hyers stability]\label{th6.1}
Assume that the  hypothesis $(H1)$  hold. Then, the equation  \eqref{eq1} is Ulam-Hyers stable provided the condition \eqref{41} hold.
\end{theorem}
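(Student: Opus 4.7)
The plan is to leverage the fixed-point representation from Theorem \ref{thm3.2} together with the Lipschitz bookkeeping already developed in Step 2 of Theorem \ref{th4.1}. Given $\epsilon>0$ and a solution $z\in C_{2-\xi;\,\varphi}(J,\R)$ of the inequality \eqref{ed1}, the Remark supplies $w$ with $|w(t)|\leq\epsilon$ such that $^H\mathcal{D}^{\mu,\,\nu\,;\,\varphi}_{a+}z(t)=f(t,z(t),\,^H\mathcal{D}^{\mu,\,\nu\,;\,\varphi}_{a+}z(t))+w(t)$. Assuming, as is customary in Ulam stability for BVPs, that $z$ satisfies the boundary conditions \eqref{eq2}--\eqref{eq3}, Theorem \ref{thm3.2} applied to the source $f+w$ gives
$$z(t)=(\varphi(t)-\varphi(a))^{\xi-1}\widetilde{A}_z+\mathcal{I}^{\mu\,;\,\varphi}_{a+}\bigl(g_z(t)+w(t)\bigr),\quad t\in J,$$
where $g_z$ satisfies \eqref{3.13} with $z$ in place of $y$ and $\widetilde{A}_z$ is given by \eqref{3.14} with $g_z$ replaced by $g_z+w$. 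Take $y\in C_{2-\xi;\,\varphi}(J,\R)$ to be the solution of \eqref{eq1}--\eqref{eq3} furnished by Theorem \ref{th4.1}, so by Theorem \ref{thm3.2},
$$y(t)=(\varphi(t)-\varphi(a))^{\xi-1}\widetilde{A}_y+\mathcal{I}^{\mu\,;\,\varphi}_{a+}g_y(t),\quad t\in J.$$

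Subtracting the two representations and weighting by $(\varphi(t)-\varphi(a))^{2-\xi}$ decomposes $|(\varphi(t)-\varphi(a))^{2-\xi}(z(t)-y(t))|$ into (i) a boundary contribution from $\widetilde{A}_z-\widetilde{A}_y$, (ii) a nonlinear contribution $(\varphi(t)-\varphi(a))^{2-\xi}\mathcal{I}^{\mu\,;\,\varphi}_{a+}|g_z-g_y|(t)$, and (iii) a perturbation contribution coming from every occurrence of $w$. Exactly as in \eqref{4.8}, hypothesis $(H1)$ yields $|g_z(t)-g_y(t)|\leq \frac{K}{1-L}|z(t)-y(t)|$; substituting this bound into (i) and (ii) reproduces verbatim the chain of estimates that produced the constant $\sigma$ of \eqref{41} in Step 2 of Theorem \ref{th4.1}, so together they are dominated by $\sigma\,\|z-y\|_{C_{2-\xi;\,\varphi}(J,\R)}$. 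The perturbation contribution (iii) is treated identically, but with $|w|\leq\epsilon$ in place of the Lipschitz difference: using Lemma \ref{lema2}(b) to evaluate each $\mathcal{I}^{\mu+\delta_i\,;\,\varphi}_{a+}$ and $\mathcal{I}^{\mu\,;\,\varphi}_{a+}$ applied to the weight $(\varphi(s)-\varphi(a))^{\xi-2}$ gives a bound $\epsilon\,\kappa$, where $\kappa$ is an explicit constant depending only on $\mu,\xi,\Lambda,\delta_i,\lambda_i$ and $\varphi(b)-\varphi(a)$.

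Taking the supremum over $t\in J$ and rearranging yields $(1-\sigma)\,\|z-y\|_{C_{2-\xi;\,\varphi}(J,\R)}\leq\kappa\,\epsilon$, and since $\sigma<1$ by \eqref{41}, we obtain $\|z-y\|_{C_{2-\xi;\,\varphi}(J,\R)}\leq C_f\,\epsilon$ with $C_f:=\kappa/(1-\sigma)$, which is precisely Definition \ref{d1}. The only real subtlety is the tracking of $\widetilde{A}_z-\widetilde{A}_y$, which contains both pieces proportional to $\mathcal{I}^{\mu+\delta_i\,;\,\varphi}_{a+}(g_z-g_y)$, $\mathcal{I}^{\mu\,;\,\varphi}_{a+}(g_z-g_y)$ (feeding the $\sigma$-estimate) and pieces proportional to $\mathcal{I}^{\mu+\delta_i\,;\,\varphi}_{a+}w$, $\mathcal{I}^{\mu\,;\,\varphi}_{a+}w$ (feeding $\kappa$); this is pure bookkeeping but must be done carefully. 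Note that no Gronwall inequality (Theorem \ref{lema4}) is needed here — the closing step is the direct rearrangement $(1-\sigma)\|z-y\|\leq\kappa\epsilon$, and the Gronwall tool is reserved for the generalized and Rassias-type variants later in the section.
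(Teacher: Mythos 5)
Your proposal is correct, but it follows a genuinely different route from the paper. The paper picks the comparison function $y$ to be a solution of the differential equation with \emph{matched boundary data} $y(a)=z(a)$, $y(b)=z(b)$ (which Definition \ref{d1} permits, since it only asks for some solution of \eqref{eq1}); it then argues that $\tilde{A}_y=\tilde{A}_z$, so the boundary contribution cancels outright, leaving the pointwise Volterra-type inequality $|z(t)-y(t)|\leq \epsilon\frac{(\varphi(b)-\varphi(a))^{\mu}}{\Gamma(\mu+1)}+\frac{K}{(1-L)\Gamma(\mu)}\int_a^t\varphi'(s)(\varphi(t)-\varphi(s))^{\mu-1}|z(s)-y(s)|\,ds$, which is closed with the $\varphi$-Gronwall inequality (Theorem \ref{lema4}) and yields a Mittag--Leffler constant $C_f$. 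You instead keep $y$ as the BVP solution, retain the term $\tilde{A}_z-\tilde{A}_y$, and absorb it together with the $\mathcal{I}^{\mu;\varphi}_{a+}(g_z-g_y)$ term into the global contraction constant $\sigma$ of \eqref{41}, closing with the algebraic rearrangement $(1-\sigma)\|z-y\|\leq\kappa\epsilon$. Your bookkeeping checks out: the two pieces you bound are exactly the $P$- and $Q$-Lipschitz estimates of Step 2 and Step 1 of Theorem \ref{th4.1}, whose sum is precisely $\sigma$, and the $w$-contributions are handled by Lemma \ref{lema2}(b). Each approach buys something: yours makes essential use of the standing hypothesis $\sigma<1$ (which the paper's Gronwall step never actually invokes) and, more importantly, avoids the paper's weakest link --- the assertion that $y(b)=z(b)$ forces $y(\tau_i)=z(\tau_i)$ for every $i$, which is needed there to conclude $\tilde{A}_y=\tilde{A}_z$ but is not justified; the paper's route, when that step is granted, gives a constant independent of $\Lambda$, $\lambda_i$, $\delta_i$ and valid without any smallness condition beyond existence. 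Both you and the paper tacitly assume that the approximate solution $z$ satisfies boundary conditions of the form \eqref{eq2}--\eqref{eq3} in order to invoke the representation of Theorem \ref{thm3.2}; you at least state this explicitly.
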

\begin{proof}
Let any $\epsilon>0$. Let $z \in C_{2-\xi ;\, \varphi }(J, \R)$ be any solution of  the inequality 
\begin{equation}\label{5.1a}
\left| ^H \mathcal{D}^{\mu,\,\nu\,;\, \varphi}_{a +}z(t)
- f(t, z(t),^H \mathcal{D}^{\mu,\,\nu\,;\, \varphi}_{a +}z(t))\right| \leq \epsilon, \quad t\in J.
\end{equation}
Then, there exists $ w\in C(J,\,\R)$ such that
\begin{equation}\label{5.7}
^H \mathcal{D}^{\mu,\,\nu\,;\, \varphi}_{a +}z(t)
= f(t, z(t),^H \mathcal{D}^{\mu,\,\nu\,;\, \varphi}_{a +}z(t))+w(t),
\end{equation}
and $\left| w(t)\right|\leq \epsilon,\, t\in J.$  In the view of  the Theorem \ref{thm3.2}.
\begin{equation}\label{59}
z(t)=(\varphi(t)-\varphi(a))^{\xi-1}\tilde{A}_z+\mathcal{I}_{a +}^{\mu\,;\, \varphi}g_z(t)+\mathcal{I}_{a +}^{\mu\,;\, \varphi}w(t)
\end{equation}
is the solution of the equation \eqref{5.7}, where $g_z(\cdot)\in C_{2-\xi;\,\varphi} (J, \R)$ satisfies the functional equation 
$$
g_z(t)=f\left( t, (\varphi(t)-\varphi(a))^{\xi-1}\tilde{A}_z+\mathcal{I}_{a +}^{\mu\,;\, \varphi}g_z(t)+\mathcal{I}_{a +}^{\mu\,;\, \varphi}w(t), g_z(t) \right), \quad t\in J,
$$
and 
$$
\tilde{A}_z
=\frac{1}{\Lambda\Gamma(\xi)}\left[\sum_{i=1}^{m}\lambda_i\,\mathcal{I}_{a+}^{\mu+\delta_i\,;\,\varphi}g_z(\tau_i)-\mathcal{I}_{a +}^{\mu\,;\, \varphi}g_z(b) \right].
$$
From \eqref{59}, we have
\begin{equation}\label{60}
\left|z(t)-(\varphi(t)-\varphi(a))^{\xi-1}\tilde{A}_z- \mathcal{I}_{a +}^{\mu\,;\, \varphi}g_z(t)\right| 
\leq \mathcal{I}_{a +}^{\mu\,;\, \varphi}\left| w(t)\right| 
\leq \epsilon \,\,\frac{(\varphi(t)-\varphi(a))^{\mu}}{\Gamma(\mu+1)}.
\end{equation}
Let $y \in C_{2-\xi ;\, \varphi }(J, \R)$ be   solution  of  the problem
\begin{equation}\label{5.2a1}
\begin{cases}
 ^H \mathcal{D}^{\mu,\,\nu\,;\, \varphi}_{a +}y(t)
=f(t, y(t),^H \mathcal{D}^{\mu,\,\nu\,;\, \varphi}_{a +}y(t)),\\
y(a)=z(a),\qquad y(b)=z(b),
\end{cases}
\end{equation}
where $y(b)=\sum_{i=1}^{m}
\lambda_i\,\mathcal{I}_{a +}
^{\delta_i\,;\, \varphi}y(\tau_i)$ and $z(b)=\sum_{i=1}^{m}
\lambda_i\,\mathcal{I}_{a +}
^{\delta_i\,;\, \varphi}z(\tau_i)$.
By Theorem \ref{thm3.2},  the  equivalent fractional integral equation to \eqref{5.2a1} is
\begin{equation}\label{5.3}
y(t)=(\varphi(t)-\varphi(a))^{\xi-1}\tilde{A}_y+\mathcal{I}_{a +}^{\mu\,;\, \varphi}g_y(t),
\end{equation}
where, $g_y$ satisfies functional equation
 \begin{equation*}
g_y(t)=f\left( t, (\varphi(t)-\varphi(a))^{\xi-1}\tilde{A}_y+\mathcal{I}_{a +}^{\mu\,;\, \varphi}g_y(t), g_y(t) \right), \quad t\in J,
\end{equation*} and $$
\tilde{A}_y
=\frac{1}{\Lambda\Gamma(\xi)}\left[\sum_{i=1}^{m}\lambda_i\,\mathcal{I}_{a+}^{\mu+\delta_i\,;\,\varphi}g_y(\tau_i)-\mathcal{I}_{a +}^{\mu\,;\, \varphi}g_y(b) \right].
$$
Now,
\begin{align}\label{5.5}
&\left| \tilde{A}_y-\tilde{A}_z\right|\nonumber\\
&=\left| \left\lbrace \frac{1}{\Lambda\Gamma(\xi)} \left[\sum_{i=1}^{m}\frac{\lambda_i}{\Gamma(\mu+\delta_i)}\int_{a}^{\tau_i}\varphi'(s)(\varphi(\tau_i)-\varphi(s))^{\mu+\delta_i-1} g_y(s)ds\right.\right.\right.\nonumber\\
&\left.\left.\left.\quad-\frac{1}{\Gamma(\mu)}\int_{a}^{b}\varphi'(s)(\varphi(b)-\varphi(s))^{\mu-1} g_y(s)ds\right]\right.\right\rbrace\nonumber\\
&-\left.\left\lbrace \frac{1}{\Lambda\Gamma(\xi)} \left[\sum_{i=1}^{m}\frac{\lambda_i}{\Gamma(\mu+\delta_i)}\int_{a}^{\tau_i}\varphi'(s)(\varphi(\tau_i)-\varphi(s))^{\mu+\delta_i-1} g_z(s)ds\right.\right.\right.\nonumber\\
&\left.\left.\left.\quad-\frac{1}{\Gamma(\mu)}\int_{a}^{b}\varphi'(s)(\varphi(b)-\varphi(s))^{\mu-1} g_z(s)ds\right]\right\rbrace\right|\nonumber \\
&\leq \frac{1}{\Lambda\Gamma(\xi)} \left\lbrace \sum_{i=1}^{m}\frac{\lambda_i}{\Gamma(\mu+\delta_i)}\int_{a}^{\tau_i}\varphi'(s)(\varphi(\tau_i)-\varphi(s))^{\mu+\delta_i-1}\left|g_y(s) - g_z(s)\right| ds\right.\nonumber\\
&\left.\quad-\frac{1}{\Gamma(\mu)}\int_{a}^{b}\varphi'(s)(\varphi(b)-\varphi(s))^{\mu-1} \left|g_y(s) - g_z(s)\right|ds\right\rbrace.
\end{align}
By hypothesis ($H1$), we obtain
\begin{align*}
 \left| g_y(t)-g_z(t)\right|&= \left| f(t,y(t), g_y(t))-f(t,z(t), g_z(t))\right|\\
 &\leq K \left| y(t)-z(t)\right|+ L\left| g_y(t)-g_z(t)\right|.
\end{align*}
This gives,
\begin{equation}\label{5.6}
\left| g_{y}(t)-g_{z}(t)\right|\leq \frac{K}{1-L} \left| y(t)-z(t)\right|.
\end{equation}
Using equation \eqref{5.6} in equation \eqref{5.5} we get
\begin{align}\label{61}
&\left| \tilde{A}_y-\tilde{A}_z\right|\nonumber\\
&\leq \frac{K}{1-L}\frac{1}{\Lambda\Gamma(\xi)} \left\lbrace \sum_{i=1}^{m}\frac{\lambda_i}{\Gamma(\mu+\delta_i)}\int_{a}^{\tau_i}\varphi'(s)(\varphi(\tau_i)-\varphi(s))^{\mu+\delta_i-1}\left|y(s) - z(s)\right| ds\right.\nonumber\\
&\left.\quad-\frac{1}{\Gamma(\mu)}\int_{a}^{b}\varphi'(s)(\varphi(b)-\varphi(s))^{\mu-1} \left|y(s) - z(s)\right|ds\right\rbrace\nonumber\\
&\leq \frac{K}{1-L}\frac{1}{\Lambda\Gamma(\xi)} \left\lbrace \sum_{i=1}^{m}\lambda_i\,\mathcal{I}_{a+}^{\mu+\delta_i\,;\,\varphi}\left|y(\tau_i) - z(\tau_i)\right| +\mathcal{I}_{a+}^{\mu\,;\,\varphi}\left|y(b) - z(b)\right| \right\rbrace.
\end{align}
Since $y(b)=z(b)$, we must have $y(\tau_i)=z(\tau_i)(i=1,2,\cdots,m)$. Therefore, from inequality \eqref{61}, we obtain $ \tilde{A}_y=\tilde{A}_z$.

Using  equations \eqref{60} and \eqref{5.6}, we have
\begin{align*}
&\left|z(t)-y(t)\right|\\
&=\left|z(t)-\left[ (\varphi(t)-\varphi(a))^{\xi-1}\tilde{A}_y+
 \frac{1}{\Gamma(\mu)}\int_{a}^{t}\varphi'(s)(\varphi(t)-\varphi(s))^{\mu-1}g_y(s) ds\right] \right|\\
&\leq\left|z(t)- (\varphi(t)-\varphi(a))^{\xi-1}\tilde{A}_z- \frac{1}{\Gamma(\mu)}\int_{a}^{t}\varphi'(s)(\varphi(t)-\varphi(s))^{\mu-1}g_z(s) ds\right| \\
&~~+\left| \frac{1}{\Gamma(\mu)}\int_{a}^{t}\varphi'(s)(\varphi(t)-\varphi(s))^{\mu-1}g_z(s) ds-\frac{1}{\Gamma(\mu)}\int_{a}^{t}\varphi'(s)(\varphi(t)-\varphi(s))^{\mu-1}g_y(s) ds \right|\\
&\leq \epsilon \,\,\frac{(\varphi(b)-\varphi(a))^{\mu}}{\Gamma(\mu+1)}+\frac{1}{\Gamma(\mu)}\int_{a}^{t}\varphi'(s)(\varphi(t)-\varphi(s))^{\mu-1}\left|g_z(s)-g_y(s) \right| ds\\
&\leq \epsilon \,\,\frac{(\varphi(b)-\varphi(a))^{\mu}}{\Gamma(\mu+1)}+\frac{K}{1-L}\frac{1}{\Gamma(\mu)}\int_{a}^{t}\varphi'(s)(\varphi(t)-\varphi(s))^{\mu-1}\left|z(s)-y(s) \right|ds.
\end{align*}
Applying Gronwall inequality(Theorem \ref{lema4}) with $u(t)=\left|z(t)-y(t) \right|,\, v(t)=\epsilon \,\,\frac{(\varphi(b)-\varphi(a))^{\mu}}{\Gamma(\mu+1)}$ and $g(t)=\frac{K}{(1-L)\Gamma(\mu)}$, we obtain
$$
\left|z(t)-y(t)\right|
\leq\epsilon\,\,\frac{(\varphi(b)-\varphi(a))^{\mu}}{\Gamma(\mu+1)}E_{\mu}\left(\frac{K}{1-L}\left(\varphi(t)-\varphi(a)\right)^{\mu}\right),\quad t\in J.
$$
Therefore,
\begin{align}\label{62}
\left\Vert z-y\right\Vert _{C_{2-\xi ;\,\varphi }\left(J,\,\R\right)  }
&=\underset{t\in J 
}{\max }\left\vert \left( \varphi \left( t\right) -\varphi \left( a\right) \right)
^{2-\xi }(z(t)-y(t)) \right\vert\nonumber\\
&\leq \left( \varphi \left( t\right) -\varphi \left( a\right) \right)
^{2-\xi } \epsilon\,\,\frac{(\varphi(b)-\varphi(a))^{\mu}}{\Gamma(\mu+1)}E_{\mu}\left(\frac{K}{1-L}\left(\varphi(t)-\varphi(a)\right)^{\mu}\right)\nonumber\\
&= C_f \,\epsilon,
\end{align}
where $C_f:=\frac{(\varphi(b)-\varphi(a))^{\mu+2-\xi}}{\Gamma(\mu+1)}E_{\mu}\left(\frac{K}{1-L}\left(\varphi(b)-\varphi(a)\right)^{\mu}\right)$. Thus, the equation \eqref{eq1} is Ulam-Hyers stable.
\end{proof}

\begin{rem}
Define $\phi_f:\R_+\to\R_+$  by $\phi_f(\epsilon)=C_f\, \epsilon $. Then, $\phi_f\in C(\R_+,\,\R_+)$ and $\phi_f(0)=0$. Then \eqref{62} can be written as 
$$
\left\Vert z-y\right\Vert _{C_{2-\xi ;\,\varphi }\left(J,\,\R\right)  }\leq \phi_f(\epsilon). $$
Thus, the equation \eqref{eq1} is generalized Ulam-Hyers stable.
\end{rem}

\begin{theorem}[Ulam-Hyers-Rassias stability]\label{th6.2}
Assume that the  hypothesis $(H1)$  hold. Let there exists non decreasing function such $\chi\in C(J,\,\R_+)$  and $K^*>0$ such that
\begin{equation}\label{5.11}
\frac{1}{\Gamma(\mu)}\int_{a}^{t}\varphi'(s)(\varphi(t)-\varphi(s))^{\mu-1}\chi(s) ds\leq K^*\,\chi(t),\quad \text{for all}\,\, t\in J.
\end{equation} Then, the implicit FDE \eqref{eq1} is Ulam-Hyers-Rassias stable provided the condition \eqref{41} hold.
\end{theorem}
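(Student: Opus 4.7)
The proof closely parallels that of Theorem \ref{th6.1}, with the constant bound $\epsilon$ replaced by the weight $\epsilon\chi(t)$ throughout and the key hypothesis \eqref{5.11} used to control the $\varphi$-Riemann--Liouville integral of the perturbation. I would first fix $\epsilon>0$ and take an arbitrary $z\in C_{2-\xi;\,\varphi}(J,\R)$ satisfying \eqref{ed3}. The remark preceding the theorem supplies $w\in C(J,\R)$ with $|w(t)|\le\epsilon\chi(t)$ such that ${}^H\mathcal{D}^{\mu,\nu;\,\varphi}_{a+}z(t)=f(t,z(t),{}^H\mathcal{D}^{\mu,\nu;\,\varphi}_{a+}z(t))+w(t)$, and Theorem \ref{thm3.2} applied to this perturbed equation yields the representation
\[
z(t)=(\varphi(t)-\varphi(a))^{\xi-1}\tilde{A}_z+\mathcal{I}_{a+}^{\mu;\,\varphi}g_z(t)+\mathcal{I}_{a+}^{\mu;\,\varphi}w(t),\quad t\in J.
\]

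Next, mimicking the auxiliary construction of Theorem \ref{th6.1}, let $y\in C_{2-\xi;\,\varphi}(J,\R)$ be the unique solution of \eqref{eq1}--\eqref{eq3} with the boundary data replaced by $y(a)=z(a)$ and $y(b)=z(b)$. Repeating the estimates \eqref{5.5}--\eqref{61} verbatim produces $\tilde{A}_y=\tilde{A}_z$. The crucial new ingredient is to invoke hypothesis \eqref{5.11} to bound
\[
\bigl|z(t)-(\varphi(t)-\varphi(a))^{\xi-1}\tilde{A}_z-\mathcal{I}_{a+}^{\mu;\,\varphi}g_z(t)\bigr|\le \mathcal{I}_{a+}^{\mu;\,\varphi}|w|(t)\le \epsilon K^{*}\chi(t).
\]
Combining this with the Lipschitz bound \eqref{5.6} on $g_z-g_y$, and using $\tilde{A}_y=\tilde{A}_z$, I arrive at the weighted Volterra-type inequality
\[
|z(t)-y(t)|\le \epsilon K^{*}\chi(t)+\frac{K}{(1-L)\Gamma(\mu)}\int_{a}^{t}\varphi'(s)(\varphi(t)-\varphi(s))^{\mu-1}|z(s)-y(s)|\,ds.
\]

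Since $\chi$ is non-decreasing by assumption, $v(t):=\epsilon K^{*}\chi(t)$ is non-decreasing, which authorises the sharper (Mittag--Leffler) form of the $\varphi$-Gronwall inequality in Theorem \ref{lema4} with $g(t)=K/((1-L)\Gamma(\mu))$, giving
\[
|z(t)-y(t)|\le \epsilon K^{*}\chi(t)\,E_{\mu}\!\left(\frac{K}{1-L}(\varphi(t)-\varphi(a))^{\mu}\right),\quad t\in J.
\]
Multiplying by $(\varphi(t)-\varphi(a))^{2-\xi}\le(\varphi(b)-\varphi(a))^{2-\xi}$ (valid because $0\le 2-\xi<1$) yields the pointwise estimate
\[
\bigl|(\varphi(t)-\varphi(a))^{2-\xi}(z(t)-y(t))\bigr|\le \epsilon\,C_{f,\chi}\,\chi(t),\quad t\in J,
\]
with $C_{f,\chi}:=K^{*}(\varphi(b)-\varphi(a))^{2-\xi}E_{\mu}\!\left(\frac{K}{1-L}(\varphi(b)-\varphi(a))^{\mu}\right)$, which is exactly the Ulam--Hyers--Rassias bound of Definition \ref{d3}. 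The only genuine obstacle, already dispatched in Theorem \ref{th6.1}, is the identification $\tilde{A}_y=\tilde{A}_z$, which rests on the matched boundary values at $a$ and $b$ together with the implicit Lipschitz control \eqref{5.6}; beyond that, the single new step is the conversion $\mathcal{I}_{a+}^{\mu;\,\varphi}|w|(t)\le \epsilon K^{*}\chi(t)$ furnished by hypothesis \eqref{5.11}, which propagates cleanly through the Gronwall argument precisely because $\chi$ is non-decreasing.
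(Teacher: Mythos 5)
Your proposal is correct and follows essentially the same route as the paper's own proof: the same representation of $z$ via Theorem \ref{thm3.2} with the perturbation term, the same identification $\tilde{A}_y=\tilde{A}_z$ carried over from Theorem \ref{th6.1}, the bound $\mathcal{I}_{a+}^{\mu;\,\varphi}|w|(t)\le\epsilon K^{*}\chi(t)$ from \eqref{5.11}, and the $\varphi$-Gronwall inequality of Theorem \ref{lema4} yielding the same constant $C_{f,\chi}$. Your explicit remark that the monotonicity of $\chi$ is what licenses the Mittag--Leffler form of the Gronwall conclusion is a point the paper leaves implicit, but the argument is the same.
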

\begin{proof}
Let any $\epsilon>0$ and let $z \in C_{2-\xi ;\, \varphi }(J, \R)$ be any solution of  the inequality 
\begin{equation}\label{5.2a}
\left| ^H \mathcal{D}^{\mu,\,\nu\,;\, \varphi}_{a +}z(t)
- f(t, z(t),^H \mathcal{D}^{\mu,\,\nu\,;\, \varphi}_{a +}z(t))\right| \leq \epsilon\,\chi(t), \,\, t\in J.
\end{equation}
Then, proceeding as in the proof of Theorem \ref{th6.1}, we obtain
\begin{equation}\label{5.12}
\left|z(t)-(\varphi(t)-\varphi(a))^{\xi-1}\tilde{A}_z- \mathcal{I}_{a +}^{\mu\,;\, \varphi}g_z(t)\right| 
\leq \mathcal{I}_{a +}^{\mu\,;\, \varphi}\left| w(t)\right| \leq\epsilon \,\,\mathcal{I}_{a +}^{\mu\,;\, \varphi} \chi(t)
\leq \epsilon\, K^* \chi(t),\,\, t\in J.
\end{equation}
Taking $y\in C_{2-\xi}(J,\,\R)$ be any solution of the problem  \eqref{5.2a1}, and following similar steps as in the proof of  Theorem \ref{th6.1}, we obtain
$$
\left|z(t)-y(t)\right|\leq \epsilon\, K^* \chi(t)+\frac{K}{1-L}\frac{1}{\Gamma(\mu)}\int_{a}^{t}\varphi'(s)(\varphi(t)-\varphi(s))^{\mu-1}\left|z(s)-y(s) \right|ds,\,\, t\in J.
$$
By applying Gronwall inequality (Theorem \ref{lema4}), we get
$$
\left|z(t)-y(t)\right|
\leq\epsilon \,K^* \chi(t) E_{\mu}\left(\frac{K}{1-L}\left(\varphi(t)-\varphi(a)\right)^{\mu}\right),\,\, t\in J,
$$
which gives,
\begin{equation}\label{63}
\left|  \left(\varphi(t) -\varphi(t) \right) ^{2-\xi}\left( z(t)-y(t)\right) \right| 
\leq C_{f,\chi} \,\epsilon\,\chi(t),\,\, t\in J,
\end{equation}
where $C_{f,\chi}:=K^*(\varphi(b)-\varphi(a))^{2-\xi}E_{\mu}\left(\frac{K}{1-L}\left(\varphi(b)-\varphi(a)\right)^{\mu}\right)$. This proves, equation \eqref{eq1} is Ulam-Hyers-Rassias stable.
\end{proof}

\begin{rem}
The proof of the equation \eqref{eq1} is generalized Ulam-Hyers-Rassias stable, follows by taking $\epsilon=1$, in the inequality \eqref{63}.
\end{rem}

\section{Example}
\begin{ex}
Consider the following nonlocal BVP for implicit FDE involving $\varphi$-Hilfer fractional derivative 
\begin{align}
^H \mathcal{D}^{\mu,\,\nu\,;\, \varphi}_{0 +}y(t)
&=\frac{\cos t}{10 \,e^{t+1}} \left[ \sin y(t)+^H \mathcal{D}^{\mu,\,\nu\,;\, \varphi}_{0 +}y(t)\right] , ~t \in  (0,\,1],  ~1<\mu<2, ~0\leq\nu\leq 1,~\label{eq11}\\
y(0)&=0,\label{eq12}\\
y(1)&=\frac{10}{7}\,\mathcal{I}_{0 +}^{\frac{4}{5}\,;\, \varphi}y\left( \frac{1}{3}\right) +\frac{13}{6}\,\mathcal{I}_{0 +}^{\frac{8}{3}\,;\, \varphi}y\left( \frac{1}{2}\right) .\qquad  \label{eq13}
\end{align}
Define    $f: (0,\,1]\times \R^2  \ra \R $ by 
$$
f(t,\,u,\,v)=\frac{\cos t}{10 \,e^{t+1}} \left[ \sin u+v\right].
$$
 Then, for any $u_i, v_i\in \R(i=1,2)$ and $t\in (0,\,1]$ we have
\begin{align*}
\left| f(t,\,u_1,\,v_1)-f(t,\,u_2,\,v_2)\right| 
&\leq \frac{1}{10 \,e}\left\lbrace \left| \sin u_1-\sin u_2\right|+\left|  v_1-v_2 \right| \right\rbrace.
\end{align*}
Without loss of generality we may take $u_1<u_2$. 
Then, by Mean value theorem, $\exists \, \gamma\in(u_1,  u_2)$ such that,
$
 \sin u_1-\sin u_2= (u_1- u_2) \cos \gamma.
$
This gives,
$
\left| \sin u_1-\sin u_2 \right| \leq \left| u_1- u_2\right|.
$
Thus,
$$
\left| f(t,\,u_1,\,v_1)-f(t,\,u_2,\,v_2)\right| \leq  \frac{1}{10 \,e}\left\lbrace \left|  u_1- u_2\right|+\left|  v_1-v_2 \right| \right\rbrace.
$$
This proves $f$ satisfies the Lipschitz type condition in hypothesis $(H1)$ with  $K=L=\frac{1}{10 \,e}=0.0368.$ Further compering the problem \eqref{eq11}-\eqref{eq13}  to the problem \eqref{eq1}-\eqref{eq3} we have
 $a=0,\,b=1,\,\lambda_1=\frac{10}{7},\,\lambda_2=\frac{13}{6},\,\delta_1=\frac{4}{5},\,\delta_2=\frac{8}{3},\,\tau_1=\frac{1}{3},\,\tau_2=\frac{1}{2}$.  
 With these constants the equations \eqref{a1}, \eqref{41} and \eqref{42} becomes
\begin{align}\label{a11}
\sigma&=\frac{0.0368\,\Gamma(\xi-1)(\varphi(1)-\varphi(0))^{\mu}}{1-0.0368}\left\lbrace\frac{(\varphi(1)-\varphi(0))^{\xi-1}}{\Lambda\Gamma(\xi)}\Omega +\frac{1}{\Gamma(\xi+\mu-1)} \right\rbrace,
\end{align}
 where 
\begin{align}\label{a12}
\Lambda&=\frac{( \varphi (1)-\varphi (0))^{\xi -1}}{\Gamma(\xi)}-\left[ \frac{10}{7}\frac{(\varphi(\frac{1}{3})-\varphi(0))^{\xi -1+\frac{4}{5}}}{\Gamma(\xi+\frac{4}{5})}+\frac{13}{6}\frac{(\varphi(\frac{1}{2})-\varphi(0))^{\xi-1+\frac{8}{3}}}{\Gamma(\xi+\frac{8}{3})}\right]
\end{align}
and
\begin{align}\label{a13}
\Omega= \frac{10}{7}\frac{(\varphi(1)-\varphi(0))^{\frac{4}{5}}}{\Gamma(\xi-1+\mu+\frac{4}{5})}+\frac{13}{6}\frac{(\varphi(1)-\varphi(0))^{\frac{8}{3}}}{\Gamma(\xi-1+\mu+\frac{8}{3})}+\frac{1}{\Gamma(\xi+\mu-1)}.
\end{align}
Since all the assumptions of the Theorem \ref{th4.1} satisfied, the problem \eqref{eq11}-\eqref{eq13} has unique solution provided  $\sigma<1.$
Further, by Theorem \ref{th6.1}, for any solution $z \in C_{2-\xi ;\, \varphi }([0,1], \R)$ of  the inequality 
\begin{equation*}
\left| ^H \mathcal{D}^{\mu,\,\nu\,;\, \varphi}_{0 +}z(t)
- \frac{\cos t}{10 \,e^{t+1}} \left[ \sin z(t)+^H \mathcal{D}^{\mu,\,\nu\,;\, \varphi}_{0 +}z(t)\right] \right| \leq \epsilon,\,\, t\in [0,1],
\end{equation*}
there exists a unique solution $y \in C_{2-\xi ;\, \varphi }([0,1], \R)$ of the problem  \eqref{eq11}-\eqref{eq13} such that 
$$
\left\Vert z-y\right\Vert _{C_{2-\xi ;\,\varphi }\left(J,\,\R\right)  }
\leq C_f \,\epsilon,
$$
where $C_f=\frac{(\varphi(1)-\varphi(0))^{\mu+2-\xi}}{\Gamma(\mu+1)}E_{\mu}\left(\frac{0.0368}{1-0.0368}\left(\varphi(1)-\varphi(0)\right)^{\mu}\right)$.
\end{ex} 
Define 
\begin{equation}\label{43}
\chi(t)=E_{\mu}\left (\frac{1}{9}\left(\varphi(t)-\varphi(0)\right)^{\mu}\right),\, t\in [0,1].
\end{equation} Then,  $\chi:[0,\,1]\rightarrow \R$ is continuous non-decreasing function such that
$$
\mathcal{I}_{0 +}^{\mu\,;\, \varphi}E_{\mu}\left (\frac{1}{9}\left(\varphi(t)-\varphi(0)\right)^{\mu}\right)=\frac{1}{9}\left[ E_{\mu}\left (\frac{1}{9}\left(\varphi(t)-\varphi(0)\right)^{\mu}\right)-1\right] \leq \frac{1}{9}E_{\mu}\left (\frac{1}{9}\left(\varphi(t)-\varphi(0)\right)^{\mu}\right) .
$$
This proves  $\chi$ satisfies condition \eqref{5.11} with $K^*=\frac{1}{9}$. Therefore, by Theorem  \ref{th6.2}, for each solution  $z \in C_{2-\xi ;\, \varphi }([0,1], \R)$ of  the inequality 
\begin{equation*}\label{5.1}
\left| ^H \mathcal{D}^{\mu,\,\nu\,;\, \varphi}_{0 +}z(t)
- \frac{\cos t}{10 \,e^{t+1}} \left[ \sin z(t)+^H \mathcal{D}^{\mu,\,\nu\,;\, \varphi}_{0 +}z(t)\right] \right| \leq \epsilon E_{\mu}\left (\frac{1}{9}\left(\varphi(t)-\varphi(0)\right)^{\mu}\right),\,\, t\in [0,1],
\end{equation*}
there exists a unique solution $y \in C_{2-\xi ;\, \varphi }([0,1], \R)$ of the problem  \eqref{eq11}-\eqref{eq13} such that 
$$
\left|  \left(\varphi(t) -\varphi(a) \right) ^{2-\xi}\left( z(t)-y(t)\right) \right| 
\leq \epsilon C_{f,\,\chi} \, E_{\mu}\left (\frac{1}{9}\left(\varphi(t)-\varphi(0)\right)^{\mu}\right) ,
$$
where $C_{f,\,\chi} =\frac{1}{9}(\varphi(1)-\varphi(0))^{2-\xi}E_{\mu}\left(\frac{0.0368}{1-0.0368}\left(\varphi(1)-\varphi(0)\right)^{\mu}\right)$.

 \textbf{A particular case:} For $\mu=\frac{3}{2},\,\nu=1 $ and $\varphi(t)=t$, the problem \eqref{eq11}-\eqref{eq13}  reduces to the  following problem
\begin{align}
^C \mathcal{D}^{\frac{3}{2}}_{0 +}y(t)
&=\frac{\cos t}{10 \,e^{t+1}} \left[ \sin y(t)+^C \mathcal{D}^{\frac{3}{2}}_{0 +}y(t)\right] ,~\label{eq111}\\
y(0)&=0,\label{eq112}\\
y(1)&=\frac{10}{7}\,\mathcal{I}_{0 +}^{\frac{4}{5}}y\left( \frac{1}{3}\right) +\frac{13}{6}\,\mathcal{I}_{0 +}^{\frac{8}{3}}y\left( \frac{1}{2}\right), \qquad  \label{eq113}
\end{align}
which is the nonlocal BVP  for implicit FDEs involving Caputo fractional derivative.
In this case $\xi=\mu+\nu(2-\mu)=2$. Further putting the values of the constants $\xi, \mu$ with $\varphi(t)=t$ in \eqref{a11}, \eqref{a12} and \eqref{a13} has the following values
$
\Lambda=0.87045, \Omega=1.35464\,\,\text{and}\,\,\sigma=0.0881987<1.
$
It can be seen  that  all the assumptions of the Theorem  \ref{th4.1} and Theorem   \ref{th6.1}   are satisfied. Therefore by Theorem  \ref{th4.1}, the Caputo nonlocal implicit BVP \eqref{eq111}-\eqref{eq113} has at least one solution.  Further, for any solution $z\in C\left([0,\,1], \R \right) =\mathcal{C}$ of the inequality 
$$
\left| ^C \mathcal{D}^{\frac{3}{2}}_{0 +}z(t)
- \frac{\cos t}{10 \,e^{t+1}} \left[ \sin z(t)+^C \mathcal{D}^{\frac{3}{2}}_{0 +}z(t)\right] \right| \leq \epsilon, \,\, t\in [0,1],
$$ 
there is solution $y\in C\left([0,\,1], \R \right) =\mathcal{C}$  of problem \eqref{eq111}-\eqref{eq113} such that
$$
\left\Vert z-y\right\Vert _\mathcal{C}\leq C_f \,\epsilon,
$$
where $\left\Vert \cdot\right\Vert _\mathcal{C}$ is supremum norm on $\mathcal{C}$ and  the problem \eqref{eq111} is Ulam-Hyers stable. Further, for $\varphi(t)=t, \mu=\frac{3}{2}$ from equation \eqref{43} we have $\chi(t)= E_{\frac{3}{2}}\left (\frac{1}{9}t^{\frac{3}{2}}\right)$ which satisfy
$$
\mathcal{I}_{0 +}^{\frac{3}{2}} E_{\frac{3}{2}}\left (\frac{1}{9} t^{\frac{3}{2}}\right)
\leq \frac{1}{9} E_{\frac{3}{2}}\left (\frac{1}{9} t^{\frac{3}{2}}\right).
 $$
Therefore, by Theorem \ref{th6.2}, for each solution $z\in\mathcal{C}$ of the inequality 
$$
\left| ^C \mathcal{D}^{\frac{3}{2}}_{0 +}z(t)
- \frac{\cos t}{10 \,e^{t+1}} \left[ \sin z(t)+^C \mathcal{D}^{\frac{3}{2}}_{0 +}z(t)\right] \right| \leq \epsilon \chi(t),\,\, t\in [0,1],
$$ 
there is solution $y\in \mathcal{C}$  of problem \eqref{eq111}-\eqref{eq113} such that
$$
\left|  z(t)-y(t) \right| 
\leq \epsilon \,C_{f,\,\chi} \,\chi(t),
$$
and hence \eqref{eq111} is Ulam-Hyers-Rassias stable. 

\section*{Acknowledgment}
The second author  acknowledges the Science and Engineering Research Board (SERB), New Delhi, India for the Research Grant (Ref: File no. EEQ/2018/000407).

\end{document}